\documentclass[a4paper]{amsart}

\usepackage[all]{xy}
\usepackage{amsfonts,amssymb}

\usepackage[hidelinks]{hyperref}

\theoremstyle{plain}
\newtheorem{theorem}{Theorem}[section]
\newtheorem{proposition}[theorem]{Proposition}
\newtheorem{lemma}[theorem]{Lemma}

\newtheorem{atheorem}{Theorem}

\newtheorem{acorollary}[atheorem]{Corollary}

\newtheorem*{question*}{Question}

\theoremstyle{definition}
\newtheorem{definition}[theorem]{Definition}

\theoremstyle{remark}
\newtheorem{remark}[theorem]{Remark}

\newcommand{\dd}{\partial}
\newcommand{\ddual}{\partial^\vee}

\def\gauss{\mathrm{gauss}}
\def\degree{d}
\def\UUU{U_{\degree,n}}

\def\P{\mathbb{P}} %Projectivization
\renewcommand{\SS}[1]{\tilde{\mathcal{P}}^{\perp}_{#1}} %the Thom space of the complement of the tautological bundle
\newcommand{\PP}[1]{\mathcal{P}_{#1}} %The Thom space of the tautological bundle (complex)
\newcommand{\SSH}[1]{\widetilde{\mathrm{Gr}}_{#1}} %The almost-complex grassmannian
\def\CS{\mathcal{A}_{n-1}(\CP^n)} %the space of complex hypersurfaces
\def\Cbun{\PP{1}(T\CP^n)} %the bundle of formal complex hypersurfaces
\def\PCbun{\mathbb{P}(T\CP^n\oplus \epsilon)} %the bundle as a porjectivization

\def\GCbun{\Gamma(\Cbun)} %the space of sections of bundle of formal complex hypersurfaces
\newcommand{\scan}{\mathfrak{s}}

%Groups and classifying spaces

\def\U{U}
\def\PU{PU}
\def\BU{BU}
\def\EU{EU}

\def\Vect{\mathbf{Vect}}
\def\Top{\mathbf{Top}}

\def\map{\mathrm{map}}

\def\cA{\mathcal{A}}

\def\lra{\longrightarrow}
\def\bZ{\mathbb{Z}}
\def\bQ{\mathbb{Q}}
\def\bC{\mathbb{C}}
\def\bR{\mathbb{R}}
\def\CP{\mathbb{C}P}
\def\Gr{\mathrm{Gr}}
\def\Diff{\mathrm{Diff}}

\def\Emb{\mathrm{Emb}}
\def\ACEmb{\widetilde{\mathrm{Emb}}}

\def\Id{\mathrm{Id}}

\usepackage[x11names]{xcolor}
\definecolor{darkred}{RGB}{139,0,0}
\definecolor{darkblue}{RGB}{0,0,139}
\definecolor{darkgreen}{RGB}{0,100,0}
\definecolor{darkmagenta}{RGB}{139,0,120}

\title[The scanning map and the space of smooth hypersurfaces]{On the scanning map and the space of smooth complex projective hypersurfaces}
\author{Ángel Javier Alonso}
\author{Federico Cantero-Morán}
\thanks{F. Cantero-Morán was funded by grants SI3/PJI/2021-00505 from Comunidad de Madrid and PID2019-108936GB-C21 from
the Spanish government. Á.\ J.\ Alonso acknowledges the support of the Austrian Science Fund (FWF) grant P 33765-N}
%\keywords{Rational homotopy theory}
\begin{document}

\begin{abstract}
The space of degree $\degree$ smooth projective hypersurfaces of $\CP^n$ admits a scanning map to a certain space of sections. We compute a rational homotopy model of the action by conjugation of the group $U(n+1)$ on this space of sections, from which we deduce that the scanning map induces a monomorphism on cohomology when $\degree > 2$. Our main technique is the rational homotopy theory of Sullivan and, more specifically, a Sullivan model for the action by conjugation of a connected topological group on a space of sections.
\end{abstract}
\maketitle

\section{Introduction}
The projectivization $\PCbun$ of the Whitney sum of the tangent space of $\CP^n$ with a trivial line bundle is a $\CP^n$-bundle over $\CP^n$. Each section of this bundle has an associated integer (its \emph{degree}), and we denote by $\Gamma(\PCbun)_\degree$ the component of those sections of degree $\degree$. The space $\UUU$ of smooth hypersurfaces of $\CP^n$ of degree $\degree$ admits a continuous map (called \emph{scanning map})
\begin{equation*}%\label{eq:scan}
	\scan \colon \UUU\lra \Gamma(\PCbun)_\degree
\end{equation*}
that can be used to produce classes in the cohomology ring of $\UUU$. Recently Aumonier has proven that this map induces an isomorphism on integral homology in degrees $*\leq \frac{d-3}{2}$ \cite{Aumonier21, Aumonier23}. In this note we investigate the behaviour of the scanning map outside this range of degrees, obtaining the following 
\begin{atheorem}\label{thm:a}
	The scanning map induces a monomorphism on rational cohomology if $\degree \geq 3$. 
\end{atheorem}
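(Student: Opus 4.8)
The plan is to show that the scanning map $\scan$ induces a surjection on rational homotopy groups, or better, that the induced map on rational cohomology is injective by producing enough cohomology classes on the target that pull back nontrivially. Since the abstract announces a rational homotopy model for the conjugation action of $U(n+1)$ on the section space $\Gamma(\PCbun)_\degree$, the natural strategy is to exploit the compatibility of the scanning map with the $U(n+1)$-action: the group $U(n+1)$ acts on $\CP^n$ and hence on both the space of smooth hypersurfaces $\UUU$ and on the section space, and $\scan$ is equivariant. This lets me pass to the Borel (homotopy quotient) construction on both sides and compare equivariant cohomology.

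First I would build a Sullivan model for the target. The section space $\Gamma(\PCbun)_\degree$ is a mapping space into a nilpotent (indeed formal) target, so its rational homotopy type is computable by the Haefliger/Sullivan model for spaces of sections; this is presumably the technical heart developed earlier in the paper. The key structural input is that $\UUU = \P H^0(\CP^n, \mathcal{O}(d)) \setminus \Delta$ is the complement of the discriminant in a projective space of forms, whose rational cohomology is well understood, while the section space is an iterated fibration whose minimal model I can write down explicitly in terms of the cohomology of $\CP^n$ and the homotopy of the fibre $\CP^n$. I would then identify the map induced by $\scan$ on minimal models and check that it is surjective on the generators in low degrees, which on the level of cohomology algebras forces injectivity of $\scan^*$.

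The decisive step, and the one where I would route through the $U(n+1)$-action, is to match the characteristic classes. On $\UUU$ the classes come from the universal hypersurface and its normal/tangent data; on the section space they come from the fibrewise tangent classes of $\PCbun$ integrated against the section. Because both collections of classes are restrictions of equivariant classes living in $H^*_{U(n+1)}$, and because $H^*(\BU(n+1);\bQ) = \bQ[c_1,\dots,c_{n+1}]$ is a polynomial algebra surjecting onto the relevant pieces, I can use the model of the conjugation action to certify that every generator of $H^*(\Gamma(\PCbun)_\degree;\bQ)$ that could obstruct injectivity is itself hit. The hypothesis $\degree \geq 3$ enters precisely here: for $\degree \geq 3$ the degree-shifting in the section-space model leaves no unstable relations, whereas for $\degree = 2$ extra symmetry of quadrics (the identification of smooth quadrics with a homogeneous space $U(n+1)/(U(1)\times U(n))$-type quotient) collapses classes and the argument fails.

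The main obstacle I anticipate is the explicit identification of $\scan^*$ on minimal models: one must show that the generators of the target model are sent to linearly independent, nonzero classes in $H^*(\UUU;\bQ)$, which requires genuine geometric input about the universal hypersurface and cannot be read off from formal nonsense alone. Aumonier's stable isomorphism in degrees $* \leq \tfrac{\degree-3}{2}$ guarantees injectivity in that range for free, so the real work is to propagate injectivity into all higher degrees; I expect this to hinge on a multiplicativity argument showing that the classes in the stable range generate (as an algebra, via the $U(n+1)$-model) enough of $H^*(\Gamma(\PCbun)_\degree;\bQ)$ to force the full statement, with the degree bound $\degree \geq 3$ ensuring the stable range is nonempty and the generators survive.
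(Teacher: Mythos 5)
There is a genuine gap: you never use the one idea that makes the paper's proof work, namely the \emph{factorization of the orbit map through} $\UUU$. Since $\scan$ is $\PU(n+1)$-equivariant, the orbit map of the section $f=\scan(W)$ factors as
\[
\PU(n+1)\lra \UUU \overset{\scan}{\lra} \Gamma(\PCbun)_\degree ,
\]
where the first arrow is the orbit map of the hypersurface $W$. The paper proves (Theorem \ref{thm:main}, via the model of the conjugation action of Theorem \ref{thm:newBS} applied to the explicit Brown--Szczarba model of the section space) that the composite induces an isomorphism on rational cohomology whenever $\degree\neq 0,1,2$; hence $\scan^*$ is injective, because it is the first factor of an isomorphism $o_W^*\circ\scan^*$. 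This argument requires no knowledge whatsoever of $H^*(\UUU;\bQ)$ and no identification of $\scan^*$ on minimal models. Your proposal, by contrast, makes that identification the centerpiece (``show that the generators of the target model are sent to linearly independent, nonzero classes in $H^*(\UUU;\bQ)$''), and you yourself concede this ``cannot be read off from formal nonsense alone'' --- so your plan has a hole exactly where the factorization provides the bypass. Note also that even if you could show the generators map to nonzero, linearly independent classes, that alone would not give injectivity of a map out of an exterior algebra: you would additionally need the product of all the image classes to be nonzero, which is again what the orbit-map factorization delivers for free.

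The fallback you propose --- propagating Aumonier's stable-range injectivity to all degrees by multiplicativity --- cannot work for general $\degree\geq 3$. For $\degree\neq 1$ the space $\Gamma(\PCbun)_\degree$ has the rational homotopy type of $\PU(n+1)$, so $H^*(\Gamma(\PCbun)_\degree;\bQ)$ is an exterior algebra on generators of degrees $3,5,\dots,2n+1$; it is \emph{not} generated in low degrees. Aumonier's range $*\leq\frac{\degree-3}{2}$ contains the degree-$3$ generator only when $\degree\geq 9$ and the top generator only when $\degree\geq 4n+5$; for $\degree=3$ the range is degree $0$ alone. So ``the classes in the stable range generate enough of $H^*(\Gamma(\PCbun)_\degree;\bQ)$'' is false whenever $\degree$ is small relative to $n$, which is precisely the regime the theorem must cover. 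A smaller point: the failure at $\degree=2$ is not an artifact of the extra symmetry of quadrics; in the paper's computation the orbit map sends $t\otimes\beta_k$ to $\bigl((1-\degree)^{n+1}-(1-\degree)^{k}\bigr)s^{-1}c_{n-k+1}$, and at $\degree=2$ this coefficient vanishes exactly when $n-k+1$ is even, so the image is the subalgebra generated by the loopings of the odd Chern classes.
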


\subsection{The orbit map} The group $\PU(n+1)$ acts on the right on the space $\UUU$ and by conjugation on the space of sections, and the scanning map is equivariant with respect to this action. The fibre bundle $\PCbun\to \CP^n$ is trivial, thus from \cite{Moller-Raussen} we obtain that the space of sections of degree $\degree\neq 1$ has the rational homotopy type of $\PU(n+1)$, while the space of sections of degree $\degree = 1$ has the rational homotopy type of $\CP^n\times \U(n)$. We will prove the following theorem, from which Theorem \ref{thm:a} follows:
\begin{atheorem}\label{thm:main}
     Let $f$ be a section in $\Gamma(\PCbun)_\degree$. The orbit map
\begin{align*}
	\{f\}\times \PU(n+1)&\lra 	\Gamma(\PCbun)_\degree
\end{align*}
induces an isomorphism in rational cohomology if $\degree\neq 0,1,2$. 
\end{atheorem}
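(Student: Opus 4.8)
The plan is to reduce the statement to a computation of rational homotopy groups, to identify the orbit map with the connecting map of a Borel fibration, and then to show that this connecting map is governed by the linear part of a relative Sullivan differential whose coefficients depend polynomially on $\degree$.

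First I would fix the section $f$ as the basepoint of $Y := \Gamma(\PCbun)_\degree$; since $\PU(n+1)$ is connected and $Y$ is path-connected, all orbit maps are freely homotopic, so it suffices to treat this one. As recalled in the introduction, for $\degree \neq 1$ the space $Y$ has the rational homotopy type of $G := \PU(n+1)$, whose rational cohomology is the exterior algebra $\Lambda(x_3, x_5, \dots, x_{2n+1})$; equivalently $\pi_k(Y)\otimes\bQ = \bQ$ for $k \in \{3,5,\dots,2n+1\}$ and vanishes otherwise. Because $G$ is a rational $H$-space with exterior rational cohomology, \emph{any} map $G \to Y$ inducing an isomorphism on $\pi_*(-)\otimes\bQ$ is a rational homotopy equivalence, hence an isomorphism on rational cohomology. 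Thus for $\degree\neq 1$ the theorem becomes the purely homotopical assertion that the orbit map is an isomorphism on rational homotopy groups. The case $\degree = 1$ is disposed of at once: there $Y \simeq_\bQ \CP^n\times \U(n)$ has nonzero \emph{even} rational cohomology, so no map out of $G$ can be an isomorphism.

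Next I would identify the orbit map homotopy-theoretically. The conjugation action assembles into the Borel fibration $Y \to \EG\times_G Y \to \BG$, and the connecting map $\Omega \BG \simeq G \to Y$ of its associated fibre sequence is exactly the orbit map through the basepoint $f$. The point of this reformulation is that the connecting map is computed by the linear part of the differential in a relative minimal Sullivan model $(\Lambda W, 0)\hookrightarrow (\Lambda W\otimes \Lambda V, D)$ of the fibration, where $\Lambda W$ is a model for $\BG$ and $(\Lambda V,\bar D)$ is a minimal model for $Y$. Since $H^*(\BG;\bQ)=\bQ[c_2,\dots,c_{n+1}]$ with $|c_i|=2i$, the space $W$ has one generator in each degree $4,6,\dots,2n+2$, while $V$ has one generator $v_{2i+1}$ in each degree $2i+1$ for $i=1,\dots,n$. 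The linear part $D_0\colon V\to W$ is then forced to be diagonal, $D_0 v_{2i+1}=\lambda_i(\degree)\,c_{i+1}$ for scalars $\lambda_i(\degree)\in\bQ$, and it is dual to the connecting homomorphism $\pi_{2i+2}(\BG)\otimes\bQ\to\pi_{2i+1}(Y)\otimes\bQ$. Hence, for $\degree\neq 1$, the theorem reduces to the single clean statement that $\lambda_i(\degree)\neq 0$ for all $i=1,\dots,n$ precisely when $\degree\notin\{0,2\}$.

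To build this model and extract the $\lambda_i(\degree)$ I would invoke the Sullivan model for the conjugation action on a space of sections. The $G$-action on $\PCbun\to\CP^n$ covers the transitive action on $\CP^n=G/H$ with $H=P(\U(1)\times\U(n))$, so the Borel construction replaces the base by $\CP^n_{hG}=BH$, whose rational cohomology is a polynomial ring on Chern roots containing $H^*(\BG;\bQ)$ as the symmetric functions, and the total space becomes a $\CP^n$-bundle over $BH$ whose fibrewise Grothendieck relation is dictated by the Chern class $(1+t)^{n+1}$ of $T\CP^n$. In the resulting Haefliger-type model for the section space over $\BG$, the fibre generator $b$ (in degree $2n+1$, with $\bar D b = a^{n+1}$) and the degree-$\degree$ basepoint enter as a Maurer--Cartan twist linear in $\degree$; transgressing $b$ against $H^*(BH)$ and projecting to the generators $c_{i+1}$ of $H^*(\BG)$ produces each $\lambda_i(\degree)$ as the coefficient of a $\degree$-dependent symmetric-function expression, so that each $\lambda_i(\degree)$ is a polynomial in $\degree$.

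The hard part will be this last computation. The section-space model is already intricate, and its $G$-equivariant enhancement over $\BG$, together with the degree-$\degree$ twist, must be carried out carefully enough to isolate the scalar transgression coefficients $\lambda_i(\degree)$ and to control their dependence on $\degree$. The decisive step is then to show that the common zero locus of the $\lambda_i$ among integers, together with the type-changing value $\degree=1$, is exactly $\{0,1,2\}$: one expects the binding constraint to come from the lowest coefficients, whose vanishing accounts for $\degree=0$ and $\degree=2$, with $\degree=1$ already excluded by the change of rational homotopy type. Establishing the precise vanishing pattern of the $\lambda_i(\degree)$ is where the real work lies.
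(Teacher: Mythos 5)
Your outline follows essentially the same route as the paper: reduce to rational homotopy using nilpotence of the section space, identify the orbit map with the fibre inclusion (connecting map) of the Borel fibration $\Gamma(\PCbun)_\degree \to \Gamma(\PCbun)_\degree\times_{\PU(n+1)}E\PU(n+1)\to B\PU(n+1)$, and read the induced map off the linear part of the differential in a relative model of that fibration, built from an equivariant enhancement of the Brown--Szczarba model (this is exactly the role of Theorem~\ref{thm:newBS}). Those reductions are all sound: the degree count forcing $D_0$ to be diagonal is correct, and the rational Whitehead argument disposing of $\degree=1$ is fine. But the proposal stops precisely where the content of the theorem begins. You never compute the coefficients $\lambda_i(\degree)$, and you say so yourself: ``establishing the precise vanishing pattern of the $\lambda_i(\degree)$ is where the real work lies.'' That pattern is not a deferrable routine check --- the entire assertion that the exceptional set is exactly $\{0,1,2\}$ lives in it, and the paper devotes its whole fourth section to producing it: a homogeneous-space presentation of $\Gr_1^{\bC}(T\CP^n)$ (with the non-obvious twist needed so that the map to $\BU(n)$ classifies the \emph{tangent} bundle rather than $\gamma_1^\perp$), explicit relative models of $\Cbun\to\CP^n$ and of its Borel construction, the equivariant section-space model, and finally the identity $\Psi(t\otimes\beta_k) = \bigl((1-\degree)^{n+1}-(1-\degree)^{k}\bigr)\, s^{-1}c_{n-k+1}$.

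Moreover, your heuristic about where the constraint comes from is wrong in a way that shows why the computation cannot be waved at. With the paper's formula, at $\degree=0$ \emph{all} coefficients vanish, while at $\degree=2$ the coefficient equals $(-1)^{n+1}-(-1)^{k}$, which vanishes exactly when $n-k+1$ is even --- a parity phenomenon, not a ``lowest coefficients'' constraint (this is why the introduction states that for $\degree=2$ the image is the subalgebra generated by the loopings of the \emph{odd} Chern classes). So the zero locus of the $\lambda_i$ has a structure you could not have guessed from generalities about ``$\degree$-dependent symmetric-function expressions''; deriving the exact polynomials (via the combinatorial identities for the classes $\bar{c}_p$ and the relations in the ideals $I$ and $J$) is the proof, and it is the part your proposal leaves open.
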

If $\degree = 0,1$ the induced homomorphism on rational cohomology is trivial while if $\degree = 2$, its image is the subalgebra generated by the looping of the odd Chern classes. 

This orbit map is one of the main traditional tools to understand the cohomology of the space of smooth hypersurfaces. From Theorem \ref{thm:main} we can recover the following theorem of Peters and Steenbrink:
\begin{acorollary}\cite{PS}
	The orbit map $\PU(n+1)\to \UUU$ induces an epimorphism on rational cohomology if $\degree \geq 3$.
\end{acorollary}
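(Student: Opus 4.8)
The plan is to deduce the corollary formally from Theorem~\ref{thm:main} together with the equivariance of the scanning map. I would fix a smooth hypersurface $\X\in\UUU$ and consider its orbit map
\[
	o_\X\colon \{\X\}\times\PU(n+1)\lra \UUU,\qquad (\X,g)\longmapsto \X\cdot g,
\]
which is the orbit map appearing in the statement of Peters--Steenbrink. Running alongside it is the orbit map into the space of sections through the base point $\scan(\X)$,
\[
	o_{\scan(\X)}\colon \{\scan(\X)\}\times\PU(n+1)\lra \Gamma(\PCbun)_\degree,\qquad (\scan(\X),g)\longmapsto \scan(\X)\cdot g.
\]

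The key observation is that equivariance of $\scan$, namely $\scan(\X\cdot g)=\scan(\X)\cdot g$, says exactly that $\scan\circ o_\X=o_{\scan(\X)}$, so that the triangle with vertices $\{\X\}\times\PU(n+1)$, $\UUU$ and $\Gamma(\PCbun)_\degree$ commutes. Passing to rational cohomology, this becomes the factorization $o_\X^*\circ\scan^*=o_{\scan(\X)}^*$.

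Now I would invoke Theorem~\ref{thm:main}: since $\degree\geq 3$ forces $\degree\neq 0,1,2$, the map $o_{\scan(\X)}$ induces an isomorphism on rational cohomology, and in particular $o_{\scan(\X)}^*$ is surjective. Because this surjection factors through $o_\X^*$ as its outer (last-applied) factor, $o_\X^*$ is itself surjective, which is precisely the claimed epimorphism. Read the other way, the same factorization shows that $\scan^*$ is injective, so that Theorem~\ref{thm:a} falls out as well.

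I do not anticipate any genuine obstacle at this stage: the corollary is a purely formal consequence of Theorem~\ref{thm:main} and the equivariance of scanning, and all of the real difficulty is concentrated in Theorem~\ref{thm:main} itself. The only points that need care are matching the orbit map in the statement with $o_\X$ and checking that the section $\scan(\X)$ is an admissible base point $f$ for Theorem~\ref{thm:main}; both are immediate, since that theorem is stated for an arbitrary section $f$.
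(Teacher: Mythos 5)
Your proposal is correct and is exactly the argument the paper intends: the equivariance $\scan\circ o_\X = o_{\scan(\X)}$ gives $o_\X^*\circ\scan^* = o_{\scan(\X)}^*$ on rational cohomology, and Theorem~\ref{thm:main} makes the composite an isomorphism for $\degree\geq 3$, so the outer factor $o_\X^*$ is surjective (and, as you note, $\scan^*$ is injective, which is Theorem~\ref{thm:a}). No gaps; this matches the paper's deduction of the Peters--Steenbrink corollary from Theorem~\ref{thm:main}.
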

Additionally, combining Aumonier's theorem with Theorem \ref{thm:main} we recover the following theorem of Tommasi (with a slightly worse stable range):
\begin{acorollary}\cite{Tommasi}
	The orbit map $\PU(n+1)\to \UUU$ induces an isomorphism on rational cohomology in degrees $*< \frac{d+1}{2}$.
\end{acorollary}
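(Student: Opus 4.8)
The plan is to exploit the $\PU(n+1)$-equivariance of the scanning map in order to factor the orbit map of $\UUU$ through the orbit map of the space of sections, and then to combine Aumonier's theorem with Theorem~\ref{thm:main}. First I would fix a smooth hypersurface $\X\in\UUU$ and set $f=\scan(\X)$; since $\scan$ preserves the degree, $f$ lies in $\Gamma(\PCbun)_\degree$. The action of $\PU(n+1)$ on $\UUU$ is on the right and $\scan$ is equivariant, so $\scan(\X\cdot g)=f\cdot g$ and the square
\begin{equation*}
\xymatrix{
\{\X\}\times\PU(n+1)\ar[r]\ar[d]_{=} & \UUU\ar[d]^{\scan}\\
\{f\}\times\PU(n+1)\ar[r] & \Gamma(\PCbun)_\degree
}
\end{equation*}
commutes, the horizontal arrows being the two orbit maps and the left vertical arrow the identity on $\PU(n+1)$. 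Writing $o_\X$ and $o_f$ for these orbit maps and passing to rational cohomology, the contravariance of $H^*(-;\bQ)$ gives the factorization $o_f^{\,*}=o_\X^{\,*}\circ\scan^{*}$.

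Next I would insert the two inputs. As $\degree\geq 3$, Theorem~\ref{thm:main} shows that $o_f^{\,*}$ is an isomorphism on rational cohomology in every degree. Aumonier's theorem asserts that $\scan$ induces an isomorphism on integral homology in degrees $*\leq\frac{\degree-3}{2}$; since with field coefficients rational cohomology is the $\bQ$-linear dual of rational homology, $\scan^{*}$ is then an isomorphism on rational cohomology in that same range. In degrees $*\leq\frac{\degree-3}{2}$ the map $\scan^{*}$ is therefore invertible, and the factorization rearranges to $o_\X^{\,*}=o_f^{\,*}\circ(\scan^{*})^{-1}$, a composite of isomorphisms. Hence $o_\X^{\,*}$ is an isomorphism on rational cohomology in degrees $*\leq\frac{\degree-3}{2}$, which is the asserted range $*<\frac{\degree+1}{2}$ weakened by the slightly smaller bound inherited from Aumonier's stable range.

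The deduction is entirely formal once these two ingredients are in place, so the real content sits in the theorems it rests upon rather than in the combination itself. The only point I would verify explicitly is the compatibility of basepoints: Theorem~\ref{thm:main} must be applied to the particular section $f=\scan(\X)$, not to an arbitrary one. This is harmless, since that theorem is stated for \emph{every} section of degree $\degree$ and $\scan$ lands $\X$ in the correct component $\Gamma(\PCbun)_\degree$, so no independence of the orbit map from the choice of basepoint, and no genericity of $\X$, needs to be argued.
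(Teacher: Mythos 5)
Your proposal is correct and is essentially the paper's own argument: the paper deduces this corollary precisely by combining the $\PU(n+1)$-equivariance of $\scan$, Theorem~\ref{thm:main} (applied for $\degree\geq 3$), and Aumonier's range, exactly as you do. Note that, as you and the paper both acknowledge, this combination only recovers Tommasi's statement in the slightly worse range $*\leq\frac{\degree-3}{2}$ rather than the full range $*<\frac{\degree+1}{2}$, which requires Tommasi's original proof.
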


In light of Aumonier's theorem,  the following question prompts naturally.
\begin{question*}
	Does Theorem \ref{thm:a} hold with integral coefficients?     
\end{question*}

\subsection{Almost-complex hypersurfaces} In the appendix we will define an almost-complex hypersurface of $\CP^n$ as a compact oriented smooth submanifold $W$ of $\CP^n$ of codimension $2$, together with a deformation of its tangent bundle $TW\subset T\CP^n|_{W}$ to a complex subbundle of $T\CP^n|_{W}$. The \emph{degree} of $W$ is the intersection product of the image of the fundamental class of $W$ in $H_{2n-2}(\CP^n)$ with the fundamental class of $\CP^1\subset \CP^n$. The set of all almost-complex hypersurfaces of $\CP^n$ forms a topological space denoted $\CS$ where hypersurfaces of different degree live in different components. Let $\CS_\degree$ denote the subspace of those of degree $\degree$. The scanning map $\scan$ factors equivariantly as
\[
	\UUU\lra \CS_{\degree}\lra \Gamma(\PCbun)_\degree
\]
and from our Theorem \ref{thm:main} we deduce, as before, the following corollaries:
\begin{acorollary}
	The map $\CS_{\degree}\lra \Gamma(\PCbun)_\degree$ induces a monomorphism on rational cohomology if $\degree \geq 3$.
\end{acorollary}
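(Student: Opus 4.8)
The plan is to deduce this corollary purely formally from the equivariant factorization of the scanning map recorded in the excerpt together with Theorem~\ref{thm:a}, so that no new geometric input is required. All the genuine content sits in Theorem~\ref{thm:a}; the corollary itself follows from the elementary observation that if a composite of two maps is injective on cohomology, then so is the factor that is applied first after passing to cohomology.

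First I would recall the factorization of the scanning map stated just above, and name the two maps, say
\[
	\UUU \xrightarrow{\ \alpha\ } \CS_{\degree} \xrightarrow{\ \beta\ } \Gamma(\PCbun)_\degree,
\]
so that $\scan = \beta \circ \alpha$. Since rational cohomology is a contravariant functor and homotopic maps agree on cohomology, this yields $\scan^* = \alpha^* \circ \beta^*$, where
\[
	H^*(\Gamma(\PCbun)_\degree;\bQ) \xrightarrow{\ \beta^*\ } H^*(\CS_{\degree};\bQ) \xrightarrow{\ \alpha^*\ } H^*(\UUU;\bQ).
\]
The map whose injectivity I must establish is exactly $\beta^*$, the one induced by $\CS_{\degree} \to \Gamma(\PCbun)_\degree$.

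Next I would invoke Theorem~\ref{thm:a}, which for $\degree \geq 3$ asserts that the composite $\scan^* = \alpha^* \circ \beta^*$ is injective. A one-line diagram chase then closes the argument: if $x \in H^*(\Gamma(\PCbun)_\degree;\bQ)$ satisfies $\beta^*(x) = 0$, then $\scan^*(x) = \alpha^*(\beta^*(x)) = 0$, and injectivity of $\scan^*$ forces $x = 0$; hence $\ker \beta^* = 0$, which is precisely the statement that $\CS_{\degree} \to \Gamma(\PCbun)_\degree$ induces a monomorphism on rational cohomology.

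There is no serious obstacle here once Theorem~\ref{thm:a} is available. The only point to check is that the indicated factorization is legitimate, i.e.\ that $\alpha$ and $\beta$ genuinely compose (at least up to homotopy) to the scanning map; but this is exactly what the excerpt supplies when it states that $\scan$ factors equivariantly through $\CS_{\degree}$. The real work lies entirely upstream, in establishing Theorem~\ref{thm:main} and deducing Theorem~\ref{thm:a} from it; by comparison this corollary is immediate.
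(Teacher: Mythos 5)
Your argument is correct: the factorization $\UUU\to \CS_{\degree}\to \Gamma(\PCbun)_\degree$ of $\scan$ is supplied by the paper, and the observation that injectivity of the composite $\scan^*=\alpha^*\circ\beta^*$ forces injectivity of $\beta^*$ is sound. However, your route differs from the paper's. The paper deduces this corollary (together with its companion) directly from Theorem \ref{thm:main}, not from Theorem \ref{thm:a}: since the map $\beta\colon \CS_{\degree}\to \Gamma(\PCbun)_\degree$ is $\PU(n+1)$-equivariant, the orbit map of a section through the image of an almost-complex hypersurface $W$ factors as $\PU(n+1)\to \CS_{\degree}\xrightarrow{\beta} \Gamma(\PCbun)_\degree$; Theorem \ref{thm:main} says the composite is an isomorphism on rational cohomology for $\degree\geq 3$, whence $\beta^*$ is injective \emph{and}, in the same stroke, the orbit map $\PU(n+1)\times\{W\}\to \CS_{\degree}$ is surjective on rational cohomology, which is exactly the next corollary in the paper. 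Your route buys economy of hypotheses --- it needs no group action or equivariance, only the set-level factorization plus Theorem \ref{thm:a} --- but it proves only the monomorphism statement, and it inherits from Theorem \ref{thm:a} the requirement that $\UUU$ be nonempty (the paper's argument needs only a point of $\CS_{\degree}$, so it would apply to any degree $\degree\neq 0,1,2$ realized by an almost-complex hypersurface, algebraic or not). One point worth making explicit in your write-up: there is no circularity in invoking Theorem \ref{thm:a}, since its proof (equivariance of $\scan$ on $\UUU$ plus Theorem \ref{thm:main}) does not pass through $\CS_{\degree}$.
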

\begin{acorollary}
	If $W$ is an almost-complex hypersurface of degree $\degree$, then the orbit map $\PU(n+1)\times \{W\}\lra \CS_{\degree}$ induces an epimorphism on rational cohomology if $\degree \geq 3$.
\end{acorollary}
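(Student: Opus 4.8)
The plan is to deduce this corollary directly from Theorem~\ref{thm:main}, using the $\PU(n+1)$-equivariant factorization $\UUU \lra \CS_\degree \lra \Gamma(\PCbun)_\degree$ of the scanning map. Write $\phi \colon \CS_\degree \to \Gamma(\PCbun)_\degree$ for the second map. Given an almost-complex hypersurface $W \in \CS_\degree$, I would set $f := \phi(W)$; since $\phi$ is continuous and degree-preserving, $f$ is a section of degree $\degree$, so Theorem~\ref{thm:main} applies to this particular $f$.

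Next I would record the commutative square relating the two orbit maps. Denote by $\omega_W \colon \PU(n+1)\times\{W\} \to \CS_\degree$ and $\omega_f \colon \{f\}\times\PU(n+1) \to \Gamma(\PCbun)_\degree$ the two orbit maps. Equivariance of $\phi$ gives $\phi(g\cdot W) = g\cdot f$ for all $g$, so (after the evident identification of $\PU(n+1)\times\{W\}$ with $\{f\}\times\PU(n+1)$) we have $\phi\circ\omega_W = \omega_f$. Applying rational cohomology turns this into the factorization $\omega_W^* \circ \phi^* = \omega_f^*$.

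Now I would invoke Theorem~\ref{thm:main}: since $\degree\geq 3$ implies $\degree\neq 0,1,2$, the map $\omega_f^*$ is an isomorphism, hence in particular surjective. Because it factors as $\omega_W^*\circ\phi^*$, the map $\omega_W^*$ is itself surjective, which is exactly the claimed epimorphism. (The same factorization, read through injectivity of $\omega_f^*$, reproves the preceding corollary that $\phi^*$ is a monomorphism.) Since Theorem~\ref{thm:main} carries all the weight, I expect no genuine obstacle; the one point to verify with care is that $\phi$ is honestly $\PU(n+1)$-equivariant and preserves degree, so that the orbit-map square commutes and Theorem~\ref{thm:main} is applicable to $f=\phi(W)$.
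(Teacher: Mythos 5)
Your proposal is correct and follows exactly the route the paper intends: the paper gives no separate proof, stating only that the corollary is deduced from Theorem~\ref{thm:main} ``as before,'' meaning precisely your argument — the equivariant factorization $\UUU\to\CS_\degree\to\Gamma(\PCbun)_\degree$ makes the section-space orbit map factor through the orbit map into $\CS_\degree$, so surjectivity of $\omega_W^*$ follows from $\omega_f^*$ being an isomorphism when $\degree\geq 3$. Your care about equivariance and degree-preservation of $\phi$ is justified by the paper's Section~\ref{ss:scanning}, where both properties are established.
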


From our Theorem \ref{thm:main} and either Tommasi's or Aumonier's theorem, we deduce that if $*< \frac{d-3}{2}$, then $H^*(\CS_{\degree};\bQ)$ contains $H^*(\UUU;\bQ)$ as a split summand.
\begin{question*}
	What is the relative cohomology of the inclusion $\UUU\subset \CS_\degree?$
\end{question*}
As far as we know, the concept of almost-complex submanifold has not appeared before in the literature. 

\subsection{Techniques} The computation of the rational homotopy type of these section spaces will be performed using the Sullivan model of a space of sections of Brown and Szczarba \cite{BS:models}. The main tool to calculate the orbit map in rational cohomology is Theorem \ref{thm:newBS}, where we give a rational homotopy model of the action by conjugation of a connected topological group $G$ on a space of sections. Nils Prigge has informed us that a similar model had been considered by Haefliger in \cite{Haefliger-GF}.

%To our knowledge, this model has not appeared in the literature before and may be of independent interest.

The scanning map $\scan$ is similar to the one used by Madsen and Weiss in their solution of the Mumford conjecture on the cohomology of the moduli space of curves \cite{MW}. %In Section \ref{ss:charclass} we explain how to compute characteristic class of bundles of hypersurfaces.

\subsection*{Acknowledgements} The authors thank Alexis Aumonier for some fruitful conversations, Oscar Randal-Williams for bringing their attention to Tommasi's paper and Aniceto Murillo for some comments on the rational homotopy techniques used in this paper. Further thanks go to Nils Prigge for pointing us to the reference \cite{Haefliger-GF}. They strongly thank the referee for his thorough work.

\section{Rational models of section spaces}
In this section, we first recall the model of Brown and Szczarba~\cite{BS:models} of the rational homotopy type of a space of sections. Afterwards, we give a model of the action by conjugation of a group on the space of sections. We assume familiarity with rational homotopy theory and Sullivan models \cite{FHT, FOT, Sullivan}.

 A \emph{Sullivan model of a path-connected space} $X$ is a Sullivan algebra $(\Lambda V,\dd)$ together with a quasi-isomorphism $(\Lambda V,\dd)\to A_{PL}(X)$. A \emph{strong rational model of a path-connected space} $X$ is a commutative differential graded algebra $(A,\dd)$ together with a quasi-isomorphism $(A,\dd)\to A_{PL}(X)$. A \emph{rational model of a path-connected space} $X$ is a commutative differential graded algebra $(A,\dd)$ together with a quasi-isomorphism $(\Lambda V,\dd)\to (A,\dd)$, where $(\Lambda V,\dd)$ is a Sullivan model of $X$.

Let $f\colon X\to Y$ be a map between topological spaces and let $(\Lambda V,\dd)$ be a Sullivan model of $Y$ and $(\Lambda V',\dd')$ a Sullivan model of $X$. A \emph{Sullivan model} of $f$ is a map $\varphi\colon (\Lambda V,\dd)\to (\Lambda V',\dd')$ and the following diagram commutes:
 \[
    \xymatrix{
        (\Lambda V,\dd)\ar[r]^{\simeq}\ar[d]^{\varphi} & A_{PL}(Y)\ar[d]^{A_{PL}(f)} \\
        (\Lambda V',\dd')\ar[r]^{\simeq} & A_{PL}(X).
    }
 \]
 Let $(A,\dd)$ and $(A',\dd')$ be rational models of $X$ and $Y$. A \emph{rational model} of $f$ is a map $\phi\colon (A,\dd)\to (A',\dd')$, such that the following diagram commutes:
 \[
    \xymatrix{
        (\Lambda V,\dd)\ar[r]^{\simeq}\ar[d]^{\varphi} & (A,\dd)\ar[d]^{\phi} \\
        (\Lambda V',\dd')\ar[r]^{\simeq} & (A',\dd').
    }
 \]
 Sullivan algebras are cofibrant in the category of cdgas. As a consequence, if $X,Y$ are finite type rational spaces, with the previous notation there is a bijection between homotopy classes of maps from $(\Lambda V',\dd')$ to $(A,\dd)$ and homotopy classes of maps from $X$ to $Y$. They also have the left lifting property with respect to surjections of commutative differential graded algebras.
 
A \emph{relative Sullivan model of a fibration} $F\to E\to X$ of path-connected spaces is a relative Sullivan algebra
 \[
    \iota\colon (A,\dd'')\lra ((\Lambda V)\otimes A,\dd')
 \]
with $(\Lambda V,\dd)$ a Sullivan model of $F$ such that the inclusion $\iota$ is a rational model of $E\to X$. Relative Sullivan models are cofibrant in the category of commutative differential graded algebras over a fixed commutative differential graded algebra.

\subsection{Rational models of homogeneous spaces}\label{ss:homogeneous} 

\cite[pp.\, 215--218]{FHT} and \cite[Theorem~2.71]{FOT} Let $G$ be a connected Lie group and $H\subset K$ a closed subgroup. Let $\Lambda V$ and $\Lambda W$ (with trivial differential) be minimal Sullivan models of $BH$ and $BG$, and let $\varphi\colon \Lambda W\to \Lambda V$ be the rational model of the induced map $BH\to BG$. Then the Sullivan algebra $\Lambda V\otimes \Lambda s^{-1}W$ with differential $\dd(s^{-1} w) = \varphi(w)$ if $w\in W$ and $\dd(v) = 0$ if $v\in V$ is a Sullivan model of $H\backslash G$. 
\begin{lemma}
    The relative Sullivan algebra $\iota\colon \Lambda W\to\Lambda V\otimes \Lambda s^{-1}W \otimes \Lambda W$ with differential $\dd(s^{-1}w) = \varphi(w)-w$ if $w\in W$ and $\dd(v) = \dd(w) = 0$ if $v\in V$ or $w\in W$ is a model of the fibration $H\backslash G\times_G EG\to BG$.    
\end{lemma}
\begin{proof}
    The fibration $H\backslash G\times_G EG\to BG$ is homotopy equivalent to the map $BH\to BG$, thus it is enough to build a quasi-isomorphism $\eta$ of Sullivan algebras making the following diagram commute:
    \[\xymatrix{
    \Lambda V &\ar@{-->}[l]_-{\eta}\Lambda V\otimes \Lambda s^{-1}W \otimes \Lambda W \\
    \Lambda W\ar[u]^{\varphi}\ar[ur]_{\iota}&
    }\]
    The quasi-isomorphism sends an $v\otimes 1\otimes 1$ to $v$, sends $1\otimes s^{-1}w\otimes 1$ to $0$ and $1\otimes 1\otimes w$ to $\varphi(w)$.
\end{proof}
In particular, if $H$ is the trivial subgroup, we obtain the relative Sullivan model
\[
    \Lambda s^{-1} W\otimes \Lambda W\longleftarrow \Lambda W
\]
with differential $\dd(s^{-1}w) = -w$ of the universal right $G$-fibration
$
   EG\lra BG.
$

	\subsection{Models of non-connected spaces} Following \cite{BuijsMurillo:basic}, let $X$ be a rational space with nilpotent connected components of finite type. Let $(A,d)$ be a commutative differential graded algebra concentrated in non-negative degrees. We will say that $(A,d)$ is a rational model of $X$ if for each choice of a basepoint $*\to X$, there is an augmentation $\epsilon\colon A\to \bQ$ such that the quotient $(A\oplus \bQ)/a\sim \epsilon(a)$ is a model of the connected component of $X$ that contains the basepoint $*$.

	\subsection{Models of section spaces}
	Let $\pi\colon E\to X$ be a fibration with finite dimensional connected base $X$ and nilpotent finite type fiber $F$. 	The space of sections of this bundle has nilpotent components \cite{Moller:nilpotent}, and therefore is suitable to be studied through the rational homotopy models of Quillen and Sullivan. Suppose that the fibration has a preferred section $f_0\colon X\to E$, and let $f$ be another section. Let $(B,d)$ be a strong rational model of $X$ (not necessarily minimal nor free) and let $A = (\Lambda V,d)$ be a Sullivan model of $F$ (not necessarily minimal). A \emph{relative model} of the fibration $\pi$ is a model of $E$ of the form $(\Lambda V\otimes B,d)$ such that
 \begin{itemize}
     \item [(i)] the map $B\to \Lambda V\otimes B$ given by $b\mapsto 1\otimes b$ is a model of the projection $\pi$,
     \item [(ii)] the projection $\Lambda V\otimes B\to \Lambda V$ that is given on generators by sending $v\otimes b$ to zero unless $b=1$ in which case it is sent to $v$, is a model of the inclusion of the fibre.
 \end{itemize}

 Let $(B_*,\ddual)$ be the dual coalgebra of $B$, with basis $\bar{B}_*$. There are two possible conventions for the differential in this dual coalgebra: either it is the dual $\ddual$ of the differential or $f\mapsto (-1)^{|f|+1}\ddual f$ (as in \cite{Lawson:signs}). Following the literature on rational homotopy models of mapping spaces, we have chosen the first convention. Observe also that after dualization, the graded coalgebra $B_*$ sits in non-positive cohomological degrees.
 
 Consider first the chain complex $A\otimes B\otimes B_*$, of which we take the free graded algebra $\Lambda(A\otimes B\otimes B_*)_{\geq 0}$. On this cdga, consider the following ideals:
	\begin{align*}
		J &= \langle b\otimes \beta - (-1)^{\alpha(|b|)}\beta(b)\mid b\in B,\beta\in B_*\rangle
		\\
		I &= \langle a_1a_2\otimes \beta - \sum_j (-1)^{|a_2||\beta_j'|}(a_1\otimes \beta_j') (a_2\otimes \beta_j'')\mid a_1,a_2\in A, \beta\in B_*\rangle
	\end{align*}
	where $\Delta(\beta) = \sum_j \beta'_j\otimes \beta''_j$ is the comultiplication in $B_*$, $\alpha(r) = \lfloor\frac{r+1}{2}\rfloor$ and $\beta(b)$ is the duality pairing $B_*\otimes B\to \bQ$.

	\begin{proposition}\label{prop:BS}\cite[Theorems~4.4\& 6.1]{BS:models} (cf. \cite[Prop.~4.4]{BuijsMurillo:basic})
		The inclusion of graded commutative algebras
  $$
    \Lambda(V\otimes B_*)\lra\Lambda(A\otimes B\otimes B_*)
$$
induces on the quotient
  $$
    \Lambda(V\otimes B_*)\lra\Lambda(A\otimes B\otimes B_*)/(J+I)
$$
an isomorphism of graded commutative algebras. This isomorphism endows $\Lambda(V\otimes B_*)$ with a differential and there is a map
\[
    \langle\Lambda(V\otimes B_*)\rangle\lra \Gamma(E_\bQ\to X_\bQ).
\]
The preferred section $f_0\colon X\to E$ gives a $0$-simplex on the right, which corresponds to an augmentation $\epsilon\colon \Lambda(V\otimes B_*)\to \bQ$. Let $K$ be the ideal generated by all the elements in negative degrees, their images under the differential, and the elements of the form $\epsilon(u)-u$. The quotient
$$
    \Lambda(V\otimes B_*)/K
$$
is a Sullivan model of the connected component of the space of sections of the fibration $E\to X$ that contains the section $f_0$.
	\end{proposition}

 \begin{remark}
    The sign $(-1)^{\alpha(|b|)}$ that appears in the ideal $J$ is due to the convention we have chosen for the differential in the dual coalgebra. Had we dualized as in \cite{Lawson:signs}, this sign would have disappeared. We have not tried that convention to stay compatible with all the previous literature on rational models of spaces of sections.
\end{remark}

\subsection{The rational model of the action by conjugation on the space of sections}
The most convenient way to encode an action up to homotopy of a path-connected Lie group $G$ on a path connected nilpotent space $X$ in rational homotopy theory is to give a relative rational model of the Borel fibration
\[
	X\overset{i}{\lra} X\times_{G} EG\overset{p}{\lra} BG,
\]
i.e., a model of $X\times_G EG$ of the form $(\Lambda V\otimes \Lambda W,d)$, where $\Lambda V$ is a model of the fibre $X$ and $\Lambda W$ is a model of the base $BG$, together with maps
\[
    \Lambda W\overset{p}{\lra} (\Lambda V)\otimes (\Lambda W)\overset{i}{\lra} \Lambda V
\]
with $p(a) = a\otimes 1$ and $i$ is the quotient map $(\Lambda V)\otimes (\Lambda W) \to (\Lambda V)\otimes (\Lambda W) / (\Lambda W)^{>0}\cong \Lambda V$. In particular, if $a\in \Lambda W$ then $\dd(1\otimes a) = 1\otimes da$, while if $v\in V$, then $\dd(v\otimes 1) = \dd(v)\otimes 1 + \ldots$ with the remaining terms of the form $v'\otimes a$ with $a\neq 1$.

Suppose that the fibration $F\to E\to X$ in the previous section is endowed with an action of a compact connected topological group $G$, in the sense that both the total space and the base are acted on by $G$ and the projection $E\to X$ is equivariant. Suppose we are given a relative model $B\to (\Lambda V)\otimes B$ with $B = \Lambda U$ of the fibration as in the previous section, and, additionally, a twice relative model $(\Lambda V\otimes B)\otimes  \Lambda W$ of the fibration $E\times_GEG\to BG$, i.e., a commutative triangle of relative Sullivan algebras
\begin{equation}\label{eq:010}
    \begin{gathered}
    \xymatrix{
        B\otimes \Lambda W \ar[r] & (\Lambda V) \otimes B \otimes \Lambda W \\
        \Lambda W\ar[u]\ar[ur]
    }
    \end{gathered}
\end{equation}
that models the triangle of fibrations
\begin{equation}\label{eq:011}
\begin{gathered}
    \xymatrix{
        X\times_{G} EG \ar[d] & E\times_{G} EG \ar[dl]\ar[l]\\
        BG
    }
\end{gathered}
\end{equation}
Recall that, under these conditions, $G$ acts on the space of sections by conjugation: if $g\in G$ and $f$ is a section, then $(g(f))(x) = g(f(g^{-1}(x)))$. Let $\bar{W}$ be an additive basis of $\Lambda W$ and let $\omega$ denote the dual of $w\in \bar{W}$. %\federico{\cite[Lemma 14]{BuijsMurillo:fixedpoints} \cite{Matsuo} He quitado estas referencias porque no tienen tanto que ver...}

\begin{remark}
    Observe that the cofibre of the vertical maps in \eqref{eq:010} is $B\to (\Lambda V)\otimes B$, which is a model of the fibre $E\to X$ of the vertical maps in \eqref{eq:011}. Similarly, the cofibre of the map $B\to (\Lambda V)\otimes B$ is a model $\Lambda V$ of the fibre $F$ of the map $E\to X$. Finally, observe that, since $\Lambda W\to B\otimes \Lambda W$ is a relative Sullivan algebra, we are implicitely stating that $B$ is also a Sullivan algebra. 

    In each of these Sullivan algebras, let us use the following notation for the differential:
    \begin{align*}
        ((\Lambda V)\otimes B\otimes \Lambda W,\partial)
        &&
        (\Lambda V\otimes B,\partial_E)
        &&
        (\Lambda V,\partial_F)
        &&
        (B,\partial_X)
        &&
        (\Lambda W,\partial_{BG}).
    \end{align*}
%    If we denote by $\partial$ de differential in $(\Lambda V)\otimes B\otimes \Lambda W$, by $\partial_E$ the differential in $\Lambda V\otimes B$, by $\partial_F$ the differential in $\Lambda V$ and by $\partial_X$ the differential in $B$ in each of the models of the previous paragraph, then we have that 
Then,
    \[
        \partial(v\otimes 1\otimes 1) = \partial_E(v)\otimes 1 + \ldots
    \] 
    where the remaining terms are in $(\Lambda V)\otimes B\otimes (\Lambda W)^{>0}$, 
    \[
        \partial_E(v\otimes 1) = \partial_F(v)\otimes 1+\ldots
    \]
    where the remaining terms are in $(\Lambda V)\otimes (B)^{>0}$,
    \[
        \partial(1\otimes b\otimes 1) = 1\otimes \partial_X(b)\otimes 1 + \ldots
    \]
    where the remaining terms are in $1\otimes B\otimes (\Lambda W)^{>0}$, and
    \[
        \partial(1\otimes 1\otimes w) = 1 \otimes 1\otimes \partial_{BG}(w).
    \]
\end{remark}
\begin{theorem} \label{thm:newBS}
    Suppose a commutative triangle of relative Sullivan algebras \eqref{eq:010} modelling the triangle of fibrations \eqref{eq:011} is given. The Borel construction of the action of $G$ by conjugation on the space of sections of the fibration $E\to G$ has the following relative model:
\[
    \Lambda W\lra \Lambda (V\otimes B_*)/K \otimes \Lambda W
\]
with differential
\[
    \dd(v\otimes \beta) = \dd v\otimes \beta + (-1)^{|v|}\sum_{w\in \bar{W}} (-1)^{\alpha(|b|)+\alpha(|w|+|b|)} v\otimes \ddual(\beta\otimes \omega)\otimes w
\]
where $(\dd v)\otimes \beta\in \Lambda (\Lambda (V)\otimes B\otimes B_*)/ (I,J) \otimes \Lambda W \cong \Lambda(V\otimes B_*)\otimes \Lambda(W)$, and we are abusing notation by writing $$\ddual\colon B_*\otimes (\Lambda W)^{\vee} \to B_*$$ for the dual of the composition $B\otimes 1 \to B\otimes \Lambda W\overset{\partial}{\to} B\otimes \Lambda W$.
\end{theorem}
\begin{proof}
After checking that $\Lambda (V\otimes B_*)\otimes \Lambda W$ is a well-defined Sullivan algebra, we have that the relative model
    \begin{equation}\label{eq:10}
        \Lambda W\lra (\Lambda(V\otimes B_*)/K)\otimes \Lambda W
    \end{equation}
    is a model for the fibration associated to some action of $G$ on the space of sections $\Gamma(E_\bQ\to X_{\bQ})$. 

    Now, using the model $(\Lambda W)\otimes B\otimes \Lambda W$ of $E\times_{G} EG$ from \eqref{eq:010}, consider the morphism of cdgas
    \[
        h\colon (\Lambda V)\otimes B\otimes \Lambda W\lra ((\Lambda(V\otimes B_*)/K)\otimes \Lambda W)\otimes_{\Lambda W} (B\otimes \Lambda W) \cong (\Lambda(V\otimes B_*)/K)\otimes B\otimes \Lambda W
    \]
    that restricts to the identity on $1\otimes B\otimes  \Lambda W$ and sends a generator $v\otimes 1\otimes 1$ to $\sum_{b\in \bar{B}} (-1)^{\alpha(|b|)}(v\otimes \beta)\otimes b\otimes 1$, where $\bar{B}$ is an additive basis of $B$ and $\beta$ denotes the dual of $b\in \bar{B}$. In order to check that it commutes with the differential, observe that if $v\otimes 1\otimes 1\in \Lambda V\otimes \Lambda W$, then the differential of its image is
    \begin{align*}
        &\sum_b (-1)^{\alpha(|b|)} \partial v\otimes \beta\otimes b\otimes 1 \\ 
        +&\sum_{b,w} (-1)^{\alpha(|b|+|w|)}(-1)^{|v|} v\otimes \ddual (\beta\otimes \omega)\otimes b\otimes w
    \\ +&\sum_b (-1)^{\alpha(b)}(-1)^{|v|+|\beta|} v\otimes \beta\otimes\dd(b).
    \end{align*}
    Of these three terms, the first is precisely the image of the differential of $v\otimes 1\otimes 1$, and the other two cancel as follows: if the generator $b'\otimes w'$ appears in $\partial(b)$ with coefficient $c_{b',w'}\in \bQ$, then $\beta$ will appear with the same coefficient in $\ddual(\beta'\otimes \omega')$. Thus the sign difference between their appeareances in the last two terms is $\alpha(|b'|+|w'|) + \alpha(|b|) + |\beta|$. If $|\beta| = r$, then this sum is $\lfloor \frac{r+2}{2}\rfloor + \lfloor\frac{r+1}{2}\rfloor+ r $ which is always odd, thus these terms do cancel. 
    
    This gives a diagram of relative Sullivan algebras
    \[
        \xymatrix@C=10pt{
            \Lambda(V\otimes B_*)/K\otimes B && \Lambda(V\otimes B_*)/K\otimes B\otimes \Lambda W\ar[ll] && \\
            &B\ar[ul]\ar[dl] && B\otimes \Lambda W \ar[ul]\ar[dl]\ar[ll]& \Lambda W\ar[l] \ar[ull]\ar[dll]\\
            \Lambda V\otimes B \ar[uu] && \Lambda V\otimes B \otimes \Lambda W\ar[uu]^>>>>>{h}\ar[ll] &&
        }
    \]
    and the left vertical map is the model of the evaluation map given in \cite[Theorem~1.1]{BuijsMurillo:basic}. After realization, we obtain the following diagram:
    \[
        \xymatrix{
            \Gamma(E)\times X \ar[rr]\ar[dd]_{\mathrm{ev}} \ar[dr] && (\Gamma(E)\times X) \times_G EG \ar[dr]\ar[dd] \ar[drr] && \\
            & X \ar[rr] && X\times_G EG \ar[r] & BG \\
            E\ar[ur]\ar[rr] && E\times_G EG \ar[ur]\ar[urr]
        }
    \]
    where the upper sequence is the Borel construction for some action of the group $G$ on the space of sections. Taking the adjoints of the vertical maps we obtain
        \[
        \xymatrix{
            \Gamma(E) \ar[rr]\ar[dd]_{\Id}  && \Gamma(E) \times_G EG \ar[dd] \ar[drr] && \\
            & &&& BG \\
            \Gamma(E)\ar[rr] && \Gamma(E)\times_G EG \ar[urr]
        }
    \]
    where the action of the group $G$ in the bottom row is again by conjugation. As the evaluation map is adjoint to the identity map, the left vertical arrow is a homotopy equivalence, and, therefore, so is the middle vertical map. As a consequence, the action of $G$ on the top row is also the action by conjugation.
\end{proof}
While this article was being refereed we learned from Nils Prigge that this result can be found in Section 5 of \cite{Haefliger-GF}. We find our presentation of the space of sections more explicit than the one given there, so we have decided to keep it here.

%%% Local Variables:
%%% TeX-master: "../main"
%%% End:

\section{The action of $\PU(n+1)$ on the space of sections}
\def\bP{\mathbb{P}}
\def\cO{\mathcal O}
We focus now on the action of $PU(n+1)$ on the bundle $\PCbun\to \CP^n$. Since $T\CP^n\oplus \epsilon = \cO(1)^{\oplus n+1}$ and $\bP(E\otimes L)\cong \bP(E)$ when $E$ is a vector bundle and $L$ is a line bundle, we have that
\[
    \bP(T\CP^n\oplus \epsilon)\cong \bP(\cO(1)^{\oplus n+1}) \cong \bP((\cO(1)^{\oplus n+1})\otimes \cO(-1)) \cong \bP(\epsilon^{n+1}).
\]
Thus the bundle $\PCbun\to \CP^n$ is isomorphic to the trivial bundle and therefore there is a homeomorphism
\begin{equation}\label{eq:2}
    \Gamma(\PCbun)\lra \map(\CP^n,\CP^n)
\end{equation}
that takes sections of degree $\degree$ to maps of degree $\degree - 1$. On the other hand, we are interested in the fibration\[
    \PCbun\times_{U(n+1)} EU(n+1)\lra \CP^n\times_{U(n+1)} EU(n+1). 
\]
which encodes the action of $U(n+1)$. The second space is homotopy equivalent to $BU(1)\times BU(n)$ and, writing $\gamma_1$ and $\gamma_n$ for the tautological bundles, the fibration becomes $\P(\gamma_1^\vee\otimes \gamma_n\oplus \epsilon)$ which is not trivial. As a consequence, the action of $U(n+1)$ on $\Gamma(\PCbun)_\degree$ is not comparable a priori with the action of $U(n+1)$ on $\map(\CP^n,\CP^n)_{d-1}$.

The rational homotopy type of spaces of maps between projective spaces was determined in \cite{Moller-Raussen}, and from the homeomorphism \eqref{eq:2} we obtain that the rational cohomology of $\Gamma(\PCbun)_\degree$ is isomorphic to the rational cohomology of $PU(n+1)$ if $\degree \neq 1$. In this section we first describe the fibrewise complex Grassmannian $\Gr_1^{\bC}(T\CP^n)$ as a homogeneous space and find a Sullivan model of the Borel construction of the action of $\U(n+1)$ on it. Second, we find a Sullivan model of the action of $\U(n+1)$ on the fibration $\Gr_1^{\bC}(T\CP^n)\to \CP^n$. Then, we find a Sullivan model of the action of $\U(n+1)$ on the fibrewise Thom space of the fibrewise tautological bundle over $\Gr_1^{\bC}(T\CP^n)\to \CP^n$, that we denote $\Cbun\to \CP^n$. This fibre bundle is homeomorphic to $\PCbun$. Finally, we obtain a Sullivan model of the action of $\U(n+1)$ on the space of sections and deduce Theorem \ref{thm:main}.

\begin{remark}
    We will work with the bundle $\Cbun$ instead of the homeomorphic bundle $\PCbun$ so that our constructions are more closely related to the scanning map. We expect this may be helpful in order to compare the space of complex hypersurfaces to other spaces of submanifolds that admit scanning maps ---for example, codimension $2$ oriented smooth submanifolds.
\end{remark}

\subsection{A homogeneous construction of \texorpdfstring{$\Cbun$}{P1(TCPn)}}
%The strategy to compute the action of $PU(n+1)$ on $\CS$ will be to first write $\CP^n$ as an homogeneous space 

The projective space $\CP^n$ is obtained as the homotopy fibre of the map $\BU(1)\times \BU(n)\to \BU(n+1)$ given by Whitney sum, in particular $\BU(1)\times \BU(n)$ is a model of the Borel construction $\CP^n\times_{\U(n+1)} \EU(n+1)$. Additionally, the pullback along $\CP^n\to \BU(1)\times \BU(n)$ of the two tautological bundles yield the tautological bundle of $\CP^n$ and its complement. We will need a description of $\CP^n$ in which the pullback of the map to $\BU(n)$ yields the tangent bundle instead of the complement of the tautological bundle. 

Recall that the tangent space of $\CP^n$ is isomorphic to the bundle $\hom(\gamma_1,\gamma_1^\perp)$ and consider the composition
\[
	\BU(1)\times \BU(n)\overset{\Delta\times \Id}{\lra} \BU(1)\times \BU(1)\times \BU(n) \overset{\Id\times \otimes}{\lra} \BU(1)\times \BU(n)\overset{\oplus}{\lra} \BU(n+1)
\]
where $\Delta$ is the diagonal and $\otimes$ classifies the tensor product of complex vector bundles. Let $q$ be the composition of all but the last map and let $f$ be the whole composition. Then $q$ is invertible, its inverse being the composition
\[\xymatrix{
	\BU(1)\times \BU(n)\ar[r]^-{\Delta\times \Id}& 
	\BU(1)\times \BU(1)\times \BU(n)	\ar[d]^{\Id\times D \times \Id}&
	\\
	&
	\BU(1)\times \BU(1)\times \BU(n)\ar[r]^-{\Id\times \otimes} &
	\BU(1)\times \BU(n),
}
\]
where $D$ classifies duality of complex vector bundles. Then, we have the following homotopy pullback squares:
\[
    \xymatrix{
        \Gr_1^{\bC}(T\CP^n)\cong \frac{\U(n+1)}{\U(1)\times \U(1)\times \U(n-1)} \ar[r]
 \ar[d] & BU(1)\times BU(1) \times \BU(n-1) \ar[d]^{\Id\times \oplus} \\
    \CP^n\ar[r]\ar[d]& BU(1)\times BU(n)\ar[d]^f  \\
    \ast\ar[r] & BU(n+1)
 }
\]
where $\CP^n$ is obtained as the homotopy fibre of $f$, the fibre of the right vertical composition is $\Gr_1^{\bC}(T\CP^n)$ and the fibre of the classifying map $\oplus$ is the projective bundle of the tautological bundle $\gamma_n$ over $\BU(n)$. Thus, the pullback of the tautological bundle over $\BU(n)$ along the inclusion of the homotopy fibre $\CP^n\to \BU(1)\times \BU(n)$ is now the tangent bundle of $\CP^n$.

\subsection{A Sullivan model of \texorpdfstring{$\Gr_1^{\bC}(T\CP^n)$}{Gr1(TCPn)}}\label{s:32} By the previous discussion, we have the following presentation of the Grassmannian $\Gr_1^\bC(T\CP^n)$ of complex $1$-planes in the tangent bundle of $\CP^n$ as a homogeneous space:
\[
	\Gr_1^\bC(T\CP^n)\cong \frac{\U(n+1)}{\U(1)\times \U(1)\times \U(n-1)}
\]
where the action corresponds to the inclusion $\U(1)\times \U(1)\times \U(n-1)\hookrightarrow \U(n+1)$ classified by the map
\[
	h\colon \BU(1)\times \BU(1)\times \BU(n-1)\overset{\Id\times \oplus}{\lra} \BU(1)\times \BU(n) \overset{f}{\lra} \BU(n+1).
\]
Recall now that a minimal Sullivan model of $\U(n)$ is given by $\Lambda(s^{-1}c_1,\ldots,s^{-1}c_{n})$, where $c_{i}\in H^{2i}(\BU(n))$ is the $i$th
Chern class and $s^{-1}c_{i}\in H^{2i-1}(\U(n))$ its looping. We will denote:
\begin{itemize}
	\item $c_{i}$ the $i$th Chern class of $\BU(n+1)$.
	\item $\hat{c}_{1}$ the first Chern class of the first copy of $\BU(1)$.
	\item $\check{c}_{1}$ the first Chern class of the second copy of $\BU(1)$.
	\item $\bar{c}_{i}$ the $i$th Chern class of $\BU(n-1)$.
\end{itemize}
The classes $\check{c}_1$ and $\hat{c}_1$ will be also denoted $\check{c}$ and
$\hat{c}$. We write $\Lambda W = \Lambda (c_1,\ldots,c_{n+1})$ for the Sullivan model of $\BU(n+1)$. Using Section \ref{ss:homogeneous} and the
formula in~\cite[Example 3.2.2]{Fulton} for the Chern classes of a tensor product with a line bundle, we obtain the map induced by $h$ on the cohomology of the classifying spaces (which we also denote by $h$):
\begin{lemma}
	The map induced by $h$ is given by
	\[
		h(c_{p}) = \sum_{i=0}^p \binom{n+1-i}{p-i} \left(\hat{c}^{p-i}\otimes 1\otimes \bar{c}_{i} + \hat{c}^{p-i}\otimes \check{c}\otimes \bar{c}_{i-1}\right).
	\]
\end{lemma}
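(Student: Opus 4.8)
The plan is to unwind the definition of $h$ until I can read off the vector bundle it classifies, and then to apply the tensor-product Chern class formula a single time. First I would trace the universal bundles through the composite defining $f$: the map $\Delta\times\Id$ duplicates the line bundle, $\Id\times\otimes$ tensors one copy into the $\BU(n)$-factor, and $\oplus$ adds the remaining copy, so $f\colon \BU(1)\times\BU(n)\to\BU(n+1)$ classifies the rank-$(n+1)$ bundle $L\oplus(L\otimes E)$, where $L$ is the universal line bundle on $\BU(1)$ and $E$ the universal bundle on $\BU(n)$. Precomposing with $\Id\times\oplus$ leaves the outer line bundle as $\hat{L}$ (with $c_1=\hat c$) and replaces $E$ by $\check{L}\oplus\bar{E}$, where $\check{L}$ has $c_1=\check c$ and $\bar{E}$ has Chern classes $\bar c_i$. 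Hence $h$ classifies
\[
    \hat{L}\oplus(\hat{L}\otimes\check{L})\oplus(\hat{L}\otimes\bar{E}).
\]

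The key observation, which is what makes the computation clean, is that writing $\hat{L}=\hat{L}\otimes\underline{\bC}$ and distributing the tensor product over the direct sum, this bundle equals
\[
    \hat{L}\otimes(\underline{\bC}\oplus\check{L}\oplus\bar{E}),
\]
the tensor product of the line bundle $\hat{L}$ with the rank-$(n+1)$ bundle $G:=\underline{\bC}\oplus\check{L}\oplus\bar{E}$. Its total Chern class is $c(G)=(1+\check c)\sum_i \bar c_i$, so that $c_i(G)=\bar c_i+\check c\,\bar c_{i-1}$.

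It then remains to apply the formula of \cite[Example 3.2.2]{Fulton} for the Chern classes of the tensor product of a line bundle (here $\hat{L}$, with $c_1=\hat c$) with a vector bundle of rank $n+1$ (here $G$), namely $c_p(\hat{L}\otimes G)=\sum_{i=0}^p\binom{(n+1)-i}{p-i}\hat c^{\,p-i}\,c_i(G)$. Substituting $c_i(G)=\bar c_i+\check c\,\bar c_{i-1}$ reproduces exactly the stated expression, with the factor $\binom{n+1-i}{p-i}$ arising automatically from the rank of $G$.

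I expect the only genuine obstacle to be the bookkeeping in the first step: correctly keeping track of which factor carries which bundle through $\Delta\times\Id$, $\Id\times\otimes$, $\oplus$, and $\Id\times\oplus$, and in particular verifying that the inner bundle $E$ becomes $\check{L}\oplus\bar{E}$ while the outer summand persists as $\hat{L}$. Once the bundle is recognised in the factored form $\hat{L}\otimes G$ with $G$ of rank $n+1$, the remainder is a single routine substitution. I would deliberately avoid the alternative of multiplying the three total Chern classes of the summands term by term: it yields the same answer but obscures where the rank $n+1$, and hence the binomial coefficient $\binom{n+1-i}{p-i}$, enters.
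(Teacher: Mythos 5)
Your proposal is correct and matches the paper's (essentially unwritten) argument: the paper simply identifies the bundle classified by $h$ and cites \cite[Example 3.2.2]{Fulton} for the Chern classes of a tensor product with a line bundle. Your factorisation $\hat{L}\oplus(\hat{L}\otimes\check{L})\oplus(\hat{L}\otimes\bar{E})=\hat{L}\otimes(\underline{\bC}\oplus\check{L}\oplus\bar{E})$, with the second factor of rank $n+1$, is exactly what produces the coefficient $\binom{n+1-i}{p-i}$ in the stated formula, so your write-up correctly fills in the details the paper leaves implicit.
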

Therefore, a rational model of the homogeneous space is
\[
		\Lambda(s^{-1}c_1,\ldots,s^{-1}c_{n+1})\otimes \Lambda(\hat{c},\check{c},\bar{c}_1,\ldots,\bar{c}_{n-1})
\]
with non-trivial differential given by $\dd(s^{-1}c_{p}) = h(c_{p})$. A relative rational model of the Borel construction of the action of $\U(n+1)$ on this homogeneous space is given by
\begin{equation}\label{eq:083}
		\Lambda(s^{-1}c,\ldots,s^{-1}c_{n+1})\otimes \Lambda(\hat{c},\check{c},\bar{c}_1,\ldots,\bar{c}_{n-1})\otimes \Lambda W
\end{equation}
with non-trivial differential $\dd(s^{-1}c_{p}) = h(c_{p}) - c_{p}$. These models are not minimal, since the differential of $s^{-1}c_{p}$ in the first case maps non-trivially to the space of indecomposable elements when $p = 1,\ldots,n-1$. Specifically, the coefficient of $\bar{c}_{p}$ in $\dd(s^{-1}c_{p})$ in these cases is $1$. Therefore, we may remove the generators $s^{-1}c_1,\ldots,s^{-1}c_{n-1}$ and quotient by the relations
\begin{align*}
	\bar{c}_{p} &= -1\otimes \check{c}\otimes \bar{c}_{p-1}-\sum_{i=0}^{p-1} \binom{n+1-i}{p-i} \left(\hat{c}^{p-i}\otimes 1\otimes \bar{c}_{i} + \hat{c}^{p-i}\otimes \check{c}\otimes \bar{c}_{i-1}\right)
\\
	\bar{c}_{p} &= c_{p}-1\otimes \check{c}\otimes \bar{c}_{p-1}-\sum_{i=0}^{p-1} \binom{n+1-i}{p-i} \left(\hat{c}^{p-i}\otimes 1\otimes \bar{c}_{i} + \hat{c}^{p-i}\otimes \check{c}\otimes \bar{c}_{i-1}\right)
\end{align*}
in each case. One can prove by induction, using the combinatorial identities
\begin{align*}
	\binom{n}{k}\binom{n-k}{j} &= \binom{n}{k+j}\binom{k+j}{j} & \sum_{i=0}^p (-1)^i\binom{p}{i} &= 0
\end{align*}
that, when $p<n$, in the non-relative case we have
\[
	\bar{c}_{p} = (-1)^p\sum_{j=0}^p \binom{n+1}{j}\hat{c}^j\check{c}^{p-j},
\]
while in the relative case,
\[
	\bar{c}_{p} = (-1)^p\sum_{q=0}^p(-1)^qc_{q}\sum_{i=0}^{p-q}\binom{n-q+1}{i}\hat{c}^i\check{c}^{p-q-i}.
	\]
With these identifications, we have in the non-relative case
\begin{align}\label{eq:diff-1}
	\dd(s^{-1}c_{n}) &= (-1)^n\sum_{j=0}^n \binom{n+1}{j}\hat{c}^j\check{c}^{n-j} \\ \label{eq:diff-2}
	\dd(s^{-1}c_{n+1}) &= (-1)^{n+1}\left(n\hat{c}^{n+1}+\sum_{j=0}^{n-1} \binom{n+1}{j}\hat{c}^{j+1}\check{c}^{n-j}\right)
\end{align}
and in the relative case
\begin{align}\label{eq:diff-3}
	\dd(s^{-1}c_{n}) &= \sum_{q=0}^n(-1)^{n+q-1}c_{q}\sum_{j=0}^{n-q} \binom{n-q+1}{j}\hat{c}^j\check{c}^{n-q-j}
        \\ \label{eq:diff-4}
        \dd(s^{-1}c_{n+1}) &= \sum_{q=0}^{n+1} (-1)^{n+q+1}c_{q}\left((n-q)\hat{c}^{n-q+1} + \sum_{j=0}^{n-q-1} \binom{n-q+1}{j}\hat{c}^{j+1}\check{c}^{n-q-j}\right)
        \\ \notag
        &= \hat{c}\cdot \dd(s^{-1}c_{n}) - \sum_{q=0}^{n+1} (-1)^{n+q+1} c_{q}\hat{c}^{n-q+1}.
\end{align}
%(the value of $\dd(s^{-1}c_{2(n+1)})$ in the relative case will be of no use in the computations of this paper).
We summarize:
\begin{proposition}
	The minimal Sullivan model of $\Gr_1^{\bC}(T\CP^n)$ is 
	\[
		\Lambda(s^{-1}c_{n},s^{-1}c_{n+1},\check{c},\hat{c})
	\]
with the differential in \eqref{eq:diff-1} and \eqref{eq:diff-2}. The relative
minimal model of the fibration $\Gr_1^{\bC}(T\CP^n)\times_{\U(n+1)} \EU(n+1)\to \BU(n+1)$ is
	\[
		\Lambda(s^{-1}c_{n},s^{-1}c_{n+1},\check{c},\hat{c})\otimes \Lambda W.
	\]
with the differential in \eqref{eq:diff-3} and \eqref{eq:diff-4}.
\end{proposition}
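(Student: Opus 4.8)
The plan is to derive both asserted models by minimalizing the two non-minimal relative Sullivan models written down immediately above the proposition: the model $\Lambda(s^{-1}c_1,\ldots,s^{-1}c_{n+1})\otimes\Lambda(\hat c,\check c,\bar c_1,\ldots,\bar c_{n-1})$ with $\dd(s^{-1}c_p)=h(c_p)$ for the homogeneous space, and its tensor product with $\Lambda W$ with $\dd(s^{-1}c_p)=h(c_p)-c_p$ for the Borel construction. Since the class $\bar c_i$ is declared only for $i\le n-1$, inspection of the formula for $h(c_p)$ shows that for each $p\le n-1$ the class $\bar c_p$ is the unique linear term of $\dd(s^{-1}c_p)$ and occurs there with coefficient $1$, whereas $\dd(s^{-1}c_n)$ and $\dd(s^{-1}c_{n+1})$ carry no linear term. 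Hence $(s^{-1}c_p,\bar c_p)$ is a contractible pair for $p=1,\ldots,n-1$, and the standard minimalization procedure for (relative) Sullivan algebras \cite{FHT} lets me cancel these $2(n-1)$ generators and pass to a quasi-isomorphic model on $\hat c,\check c,s^{-1}c_n,s^{-1}c_{n+1}$ (tensored with $\Lambda W$ in the relative case).

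I would carry out the cancellation in increasing order of $p$, since the recursion for $\bar c_p$ refers to the $\bar c_i$ with $i<p$. Cancelling $(s^{-1}c_p,\bar c_p)$ is exactly imposing $\dd(s^{-1}c_p)=0$ and solving for $\bar c_p$, which yields the two displayed recursions. The first substantive step is to solve these recursions in closed form, which I would do by induction on $p$: substituting the proposed closed forms
\[
\bar c_p=(-1)^p\sum_{j=0}^p\binom{n+1}{j}\hat c^j\check c^{p-j},
\qquad
\bar c_p=(-1)^p\sum_{q=0}^p(-1)^qc_q\sum_{i=0}^{p-q}\binom{n-q+1}{i}\hat c^i\check c^{p-q-i}
\]
for the $\bar c_i$, $i<p$, into the right-hand side of the recursion and collecting coefficients of each monomial. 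The two identities recorded in the text, the subset-of-a-subset relation $\binom{n}{k}\binom{n-k}{j}=\binom{n}{k+j}\binom{k+j}{j}$ and the alternating-sum relation $\sum_{i=0}^p(-1)^i\binom{p}{i}=0$, are precisely what make the nested binomial coefficients telescope to the stated answer. Since the non-relative formula is the relative one specialized at $c_0=1$ and $c_q=0$ for $q\ge1$, it is enough to run the induction in the relative case and specialize.

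With the closed forms in hand, the final step is to feed them into $\dd(s^{-1}c_n)=h(c_n)$ and $\dd(s^{-1}c_{n+1})=h(c_{n+1})$ (respectively $h(c_p)-c_p$), using $\bar c_i=0$ for $i\ge n$, and simplify; this yields \eqref{eq:diff-1}--\eqref{eq:diff-2} in the non-relative case and \eqref{eq:diff-3}--\eqref{eq:diff-4} in the relative case. I would then check minimality. In the non-relative case \eqref{eq:diff-1} and \eqref{eq:diff-2} are homogeneous of degrees $n$ and $n+1$ in the degree-two generators $\hat c,\check c$, hence decomposable. In the relative case, relative minimality only requires the differentials of the fibre generators to be decomposable after reduction modulo the base ideal $(\Lambda W)^{>0}$; this reduction kills every term carrying a positive-degree factor $c_q$ and again leaves decomposable polynomials in $\hat c,\check c$. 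Thus no further contractible pair survives and both models are minimal.

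I expect the main obstacle to be the combinatorial bookkeeping in the inductive step, namely keeping the two nested binomial sums aligned so that the identities above produce exactly the claimed coefficients. A secondary computational point is the factored identity on the last line of \eqref{eq:diff-4}, $\dd(s^{-1}c_{n+1})=\hat c\cdot\dd(s^{-1}c_n)-\sum_{q=0}^{n+1}(-1)^{n+q+1}c_q\hat c^{n-q+1}$, which does not simplify termwise and instead requires regrouping the double sum so that most of it reassembles $\hat c\cdot\dd(s^{-1}c_n)$ while only the pure $\hat c$-power tail remains.
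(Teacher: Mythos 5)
Your proposal is correct and takes essentially the same route as the paper: the paper likewise eliminates the generators $s^{-1}c_1,\dots,s^{-1}c_{n-1}$ together with $\bar c_1,\dots,\bar c_{n-1}$ (using that the coefficient of $\bar c_p$ in $\dd(s^{-1}c_p)$ is $1$), solves the resulting recursions for $\bar c_p$ by induction via exactly the two stated binomial identities, and substitutes the closed forms into $\dd(s^{-1}c_n)$ and $\dd(s^{-1}c_{n+1})$ to obtain \eqref{eq:diff-1}--\eqref{eq:diff-4}. One small inaccuracy: your claim that $\bar c_p$ is the \emph{unique} linear term of $\dd(s^{-1}c_p)$ fails for $p=1$, where $h(c_1)=(n+1)\hat c+\check c+\bar c_1$, but this is immaterial since the cancellation of the contractible pair only requires that $\bar c_p$ occur with coefficient $1$.
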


\subsection{A relative model of
  \texorpdfstring{$\Gr_1^{\bC}(T\CP^n)\to \CP^n$}{Gr1(TCPn) to CPn}}
We now obtain a model of the fibration 
	\[
		\Gr_1^{\bC }(T\CP^n)\lra \CP^n
	\]
	relative to the action of $\U(n+1)$ from the model of the previous section, through the change of variables (their degrees are reminded in the subscripts):
	\begin{align*}
		  b_2 &= \hat{c}
		&
		a_2 &= \check{c}
		&
		x_{2n-1} &= (-1)^n s^{-1}c_{n}
		&
		y_{2n+1} &= (-1)^n(s^{-1}c_{n+1} - \hat{c}\cdot s^{-1}c_{n}).
	\end{align*}
\begin{proposition} A relative minimal model of the Borel construction of the action of $\U(n+1)$ on the fibration $\Gr_1^{\bC}(T\CP^n)\to \CP^n$ is:
\[
	\Lambda(b,y)\otimes \Lambda W\lra \Lambda(a,x,b,y)\otimes \Lambda W
\]
with differential
\begin{align*}
	\dd(x) &= \sum_{q=0}^n(-1)^{q-1}c_{q}\sum_{j=0}^{n-q} \binom{n-q+1}{j}b^ja^{n-q-j}
	\\
	\dd(y) &= \sum_{q=0}^{n+1}(-1)^q c_{q}b^{n+1-q}
\end{align*}
%(again, the value of $\dd(y)$ is not needed in the computations of this paper).
\end{proposition}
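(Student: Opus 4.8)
The plan is to derive this model directly from the relative minimal model of $\Gr_1^{\bC}(T\CP^n)\times_{\U(n+1)}\EU(n+1)\to \BU(n+1)$ produced in the previous proposition, by performing the prescribed change of variables and then recognising the resulting cdga as a relative model over the Borel construction of the base $\CP^n$. Since $b=\hat c$ and $a=\check c$ are left unchanged and $x,y$ are obtained from $s^{-1}c_n,s^{-1}c_{n+1}$ by an invertible (triangular) substitution, the change of variables is an isomorphism of graded algebras, and it preserves the relative Sullivan condition already established. So it suffices to transport the differentials \eqref{eq:diff-3} and \eqref{eq:diff-4} across it and to verify the two defining properties of a relative model.

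First I would compute the transported differentials. The formula for $\dd(x)$ is immediate: $\dd(x)=(-1)^n\dd(s^{-1}c_n)$ is exactly \eqref{eq:diff-3} rewritten with $b=\hat c$ and $a=\check c$. For $y=(-1)^n(s^{-1}c_{n+1}-\hat c\,s^{-1}c_n)$ I would use the Leibniz rule, together with the second form of \eqref{eq:diff-4}, namely $\dd(s^{-1}c_{n+1})=\hat c\cdot\dd(s^{-1}c_n)-\sum_{q}(-1)^{n+q+1}c_q\hat c^{\,n-q+1}$. Since $\dd\hat c=0$ and $|\hat c|$ is even, the term $\hat c\cdot\dd(s^{-1}c_n)$ cancels against the contribution of $\hat c\,s^{-1}c_n$, leaving $\dd(y)=\sum_{q=0}^{n+1}(-1)^q c_q b^{n+1-q}$. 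This cancellation, which removes every occurrence of $a=\check c$ from $\dd(y)$, is the whole point of the particular combination defining $y$, and it is precisely what makes the next step possible.

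The substantive step is to check that the inclusion $\Lambda(b,y)\otimes \Lambda W\hookrightarrow \Lambda(a,x,b,y)\otimes \Lambda W$ is a relative model of the Borel construction of the fibration $\Gr_1^{\bC}(T\CP^n)\to \CP^n$. Because $\dd(b)=0$ and $\dd(y)$ involves only $b$ and the $c_q$, the subalgebra $\Lambda(b,y)\otimes \Lambda W$ is closed under the differential, so the inclusion is a morphism of cdgas, verifying property (i); this is exactly where the cancellation above is used. I would identify this sub-cdga as a relative model of $\CP^n\times_{\U(n+1)}\EU(n+1)\to \BU(n+1)$ by running the reduction of the preceding subsections for the intermediate subgroup $\U(1)\times\U(n)$ rather than $\U(1)\times\U(1)\times\U(n-1)$: the tower $\U(1)\times\U(1)\times\U(n-1)\hookrightarrow\U(1)\times\U(n)\hookrightarrow\U(n+1)$ realises the fibration $\Gr_1^{\bC}(T\CP^n)\to\CP^n$, with fibre $\U(n)/(\U(1)\times\U(n-1))\cong\CP^{n-1}$, together with a commuting square of Borel constructions, so that the composite $\Lambda W\to\Lambda(b,y)\otimes\Lambda W\to\Lambda(a,x,b,y)\otimes\Lambda W$ coincides with the model of the previous proposition. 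Setting all $c_q=0$ then gives $\dd(y)=b^{n+1}$, the minimal model of the base $\CP^n$. For property (ii), quotienting by the augmentation ideal of the base, that is putting $b=y=0$ and $c_q=0$, yields $\Lambda(a,x)$ with induced differential $\dd(x)=-a^n$, the minimal model of the fibre $\CP^{n-1}$. The hard part is this conceptual identification of $\Lambda(b,y)\otimes\Lambda W$ as the equivariant model of the base and of the triangle of Borel constructions as compatibly modelled, rather than the purely algebraic manipulations; the enabling technical point throughout remains the cancellation producing $\dd(y)$, which is what makes $\Lambda(b,y)\otimes\Lambda W$ a sub-cdga in the first place.
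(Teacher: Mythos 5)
Your proposal is correct and follows the paper's own route: the paper obtains this proposition precisely by applying the stated change of variables to the relative model of the preceding proposition, and your transported differentials (including the cancellation that removes every occurrence of $a=\check{c}$ from $\dd(y)$, which is what makes $\Lambda(b,y)\otimes\Lambda W$ a sub-cdga) match the paper's computation. Your identification of $\Lambda(b,y)\otimes \Lambda W$ as the equivariant model of the base via the tower $\U(1)\times\U(1)\times\U(n-1)\subset \U(1)\times\U(n)\subset \U(n+1)$ is exactly the justification the paper leaves implicit.
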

\begin{proof}
    First, observe that the change of variables yields the model $\Lambda(a,x,b,y)\otimes \Lambda W$ of $\Gr_1^{\bC}(T\CP^n)\times_{U(n+1)} EU(n+1)$ given in the statement.

    Second, let us denote by 
    \[
        V = \bQ\langle s^{-1}c_1,\ldots,s^{-1}c_{n+1},\hat{c},\check{c},\bar{c}_1,\ldots,\bar{c}_{n+1}\rangle.
    \]
    the vector space of generators of the algebra \eqref{eq:083}. Thinking the fibration $\Gr_1^{\bC}(T\CP^n)\to \CP^n$ as the map of homogeneous spaces
    \[
        \frac{U(n+1)}{U(1)\times U(1)\times U(n-1)}\lra \frac{U(n+1)}{U(1)\times U(n)} 
    \]
    induced by the map $\Id\times \oplus\colon U(1)\times U(1)\times U(n-1)\to U(1)\times U(n)$, we deduce that a Sullivan model of the Borel construction $\CP^n\times_{U(n+1)}EU(n+1)\to BU(n+1)$ is the subalgebra of \eqref{eq:083} generated by the vector space $V'\oplus W$ where 
    \begin{align*}
    V' &= \bQ\langle s^{-1}c_1,\ldots,s^{-1}c_{n+1},\hat{c},\tilde{c}_1,\ldots,\tilde{c}_{n}\rangle
    \end{align*}
    with 
    \begin{align*}
        \tilde{c}_1 &= \bar{c}_1 +\check{c} \\
        \tilde{c}_2&=\bar{c}_2 + \bar{c}_1\check{c}\\
        &\ldots \\
        \tilde{c}_{n-1} &= \bar{c}_{n-1} + \bar{c}_{n-1}\check{c} \\ \tilde{c}_n &= \bar{c}_{n-1}\check{c}
    \end{align*} 
    Working as in Section \eqref{s:32} (with simpler calculations), we obtain that the relative minimal model associated to that subalgebra is isomorphic to $\Lambda(b,y)\otimes \Lambda W$ with the differential in the statement.

    Finally, a relative Sullivan model of the projection 
    \[
    \Cbun\times_{U(n+1)} EU(n+1)\to \CP^n\times_{U(n+1)}EU(n+1)
    \]
    is a homotopy lift of the composition
    \[\xymatrix{
    && \Lambda(a,x,b,y)\otimes \Lambda W\ar[d]^{\simeq} \\
        \Lambda(b,y)\otimes \Lambda W\ar[r]^{\simeq}\ar@{-->}[urr] & \Lambda V'\otimes \Lambda W \ar[r] & \Lambda V\otimes \Lambda W.
    }\]
    Since the horizontal composition maps $b$ to $\hat{c}$, whose only lift is $b$, we deduce that the diagonal map sends $b$ to $b$. We then conclude noting that only choice that makes this a map of differential graded algebras is mapping $y$ to $y$.
\end{proof}

\subsection{A relative model of \texorpdfstring{$\Cbun\to \CP^n$}{P1(TCPn) to CPn}} To obtain a relative model of the Thom space, it is enough to restrict the previous model to the ideal generated by the Euler class of the tautological bundle, which in this case is $a$ (see \cite[Thm.\, 2.4]{FOT:rhtThom} or \cite[Thm.\, 2.9]{BCC:rhtThom}). This ideal is generated as a subalgebra by $u=a$ and $t = ax$, and we obtain the following model:
\begin{proposition}\label{prop:35}
    A relative Sullivan model of the projection $\Cbun\to \CP^n$ is
    \[
	\Lambda(b,y)\lra \Lambda(u,t,b,y)
\] 
with differential
\begin{align*}
	\dd(t) &= (-1)^n\sum_{i=0}^n \binom{n+1}{i} b^{i}u^{n-i+1} \\
	\dd(y) &= b^{n+1}.
\end{align*}
    A relative Sullivan model of the Borel construction of the action of $\U(n+1)$ on the projection $\Cbun\to \CP^n$ is
    \[
	\Lambda(b,y)\otimes \Lambda W\lra \Lambda(u,t,b,y)\otimes \Lambda W
\] 
with differential
\begin{align*}
	\dd(t) &= \sum_{q=0}^n(-1)^{q-1}c_{q}\sum_{j=0}^{n-q} \binom{n-q+1}{j}b^ju^{n-q-j+1}
        \\
        \dd(y) &= \sum_{q=0}^{n+1}(-1)^q  c_{q}b^{n+1-q}.
\end{align*}
\end{proposition}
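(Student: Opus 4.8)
The plan is to realise $\Cbun=\PP{1}(T\CP^n)$ as the fibrewise Thom space over $\CP^n$ of the tautological complex line bundle $\gamma$ on the fibrewise Grassmannian $\Gr_1^\bC(T\CP^n)=\P(T\CP^n)$, whose fibrewise Euler class is exactly the generator $a$ of the model of the previous proposition. By the description of a Thom space in terms of the ideal generated by the Euler class \cite{FOT:rhtThom,BCC:rhtThom}, a relative model of this fibrewise Thom space is the sub-cdga of $\Lambda(a,x,b,y)$ (respectively of $\Lambda(a,x,b,y)\otimes\Lambda W$ in the Borel case) generated over the base $\Lambda(b,y)$ by the Euler class together with the differential ideal $(a)$ it generates. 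Its underlying graded vector space is $\Lambda(b,y)\oplus(a)$, which is the algebraic incarnation of the fibrewise Thom isomorphism $\tilde{H}^{*}(\Cbun)\cong H^{*-2}(\P(T\CP^n))$.

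First I would identify this sub-cdga as an algebra. Writing $\Lambda(a,x,b,y)=\Lambda(b,y)\otimes\bQ[a]\otimes\Lambda(x)$ with $a$ in degree $2$ and $x$ in degree $2n-1$, the elements $u:=a$ and $t:=ax$ generate the sub-cdga freely over $\Lambda(b,y)$: the monomials $u^{i}$ and $u^{i-1}t$ are sent to $a^{i}$ and $a^{i}x$, which together with the unit form a basis of $\Lambda(b,y)\oplus(a)$, while $t^{2}=(ax)^{2}=0$ holds automatically because $x$ is odd. This yields an isomorphism $\Lambda(u,t,b,y)\cong\Lambda(b,y)\oplus(a)$ of graded algebras with $|u|=2$ and $|t|=2n+1$, the latter matching the degrees of the two generators of the fibre $\CP^n=\PP{1}(\bC^{n})$.

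Next I would transport the differential. Since $\dd a=0$ and $a$ is of even degree, the Leibniz rule gives $\dd u=0$ and $\dd t=\dd(ax)=a\,\dd x$; substituting the formula for $\dd x$ from the previous proposition and rewriting $a=u$ produces the stated expression for $\dd t$, both in the non-equivariant case (where only $c_{0}=1$ survives) and in the Borel case. The base generators are untouched by the Thom construction, so $\dd b=0$ and $\dd y=b^{n+1}$ (respectively its Borel form $\sum_{q}(-1)^{q}c_{q}b^{n+1-q}$) are inherited verbatim from $\Lambda(b,y)$. The sub-cdga is closed under $\dd$ precisely because $a\,\dd x$ lies in the ideal $(a)$ while $\dd y$ lies in $\Lambda(b,y)$.

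Finally I would verify that this is indeed a model and isolate the delicate point. Multiplication by $a$ is injective and commutes with $\dd$, so it induces $H^{*}((a))\cong H^{*-2}(\P(T\CP^n))$; hence $H^{*}(\Lambda(u,t,b,y))\cong H^{*}(\CP^n)\oplus H^{*-2}(\P(T\CP^n))$, which has the correct total dimension $(n+1)^{2}$ for the $\CP^n$-bundle $\Cbun\to\CP^n$. As a consistency check, setting $b=y=0$ (and all $c_{i}=0$) collapses the model to $\Lambda(u,t)$ with $\dd t=u^{n+1}$, the minimal model of the fibre $\CP^n$. I expect the main obstacle to be the justification that the Thom-space model of \cite{FOT:rhtThom,BCC:rhtThom} applies in this relative and $\U(n+1)$-equivariant setting, that is, fibrewise over $\CP^n$ and, for the Borel version, fibrewise over $\BU(n+1)$; this should follow from the naturality of the construction and the fact that the Euler class $a=\check{c}$ is defined equivariantly at the level of classifying spaces, after which only the routine sign bookkeeping in $\dd t=a\,\dd x$ remains.
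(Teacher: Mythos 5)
Your proposal is correct and takes essentially the same route as the paper, which likewise obtains the model by restricting the relative model of $\Gr_1^{\bC}(T\CP^n)\to\CP^n$ to the ideal generated by the Euler class $a$ (citing the same references), observes that this ideal is generated over $\Lambda(b,y)$ by $u=a$ and $t=ax$, and transports the differential by the Leibniz rule; your monomial-basis identification, Thom-isomorphism check, and restriction to the fibre merely make explicit what the paper leaves implicit. The only caveat is the sign in the non-equivariant formula: setting $c_0=1$ and $c_q=0$ for $q\geq 1$ in the Borel formula gives $\dd(t)=-\sum_{j}\binom{n+1}{j}b^{j}u^{n-j+1}$ rather than the stated $(-1)^n\sum_{i}\binom{n+1}{i}b^{i}u^{n-i+1}$, but this discrepancy is already present in the paper itself (compare \eqref{eq:diff-1} with \eqref{eq:diff-3}) and is immaterial, since it amounts to rescaling a generator.
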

\begin{proof}
     For the relative model of the Borel construction, we redo the argument in \cite{FOT:rhtThom} and \cite{BCC:rhtThom} fibrewise: If $F\to E\overset{p}{\to} B$ is a fibre bundle and $F'\to E'\overset{p'}{\to} B$ is a subbundle, consider its fibrewise cofiber. 
\[
	\xymatrix{
		F' \ar[r]^{\subset}\ar[d] & F\ar[d] \ar[r] & F/F' \ar[d] \\
		E' \ar[r]^{\subset} \ar[dr] & E\ar[d]\ar[r] & E'' \ar[dl] \\
		&B&
	}
\]
If we are given relative Sullivan models of the fibre bundles $p,p'$ and the inclusion:
\[
	\xymatrix{
		\Lambda V' & \Lambda V \ar[l]^{\iota} \\
		\Lambda W \otimes \Lambda V'\ar[u] & \Lambda W\otimes \Lambda V\ar[l]\ar[u] \\
		&\Lambda W\ar[ul]\ar[u]
	}
\]
such that $\iota$ is surjective, then the relative Sullivan algebra
\[
	\operatorname{Ker} \iota \longleftarrow \Lambda W\otimes \operatorname{Ker} \iota \longleftarrow \Lambda W
\]
is a relative model (in general not a Sullivan model) of the fibrewise cofiber $F/F'\to E''\to B$, i.e., there is a commutative diagram
\begin{equation}\label{eq:1111}
    \begin{aligned}
	\xymatrix{
		\operatorname{Ker} \iota & \ar[l]\Lambda W\otimes \operatorname{Ker} \iota &\ar[l]  \Lambda W \\
		\Lambda V'' \ar[u]^{\simeq}& \ar[l]\Lambda W\otimes \Lambda V''\ar[u]^{\simeq} &\ar[l]  \Lambda W\ar[u]^{\mathrm{id}}
	}
     \end{aligned}
\end{equation}
where $\Lambda W\to \Lambda V''\otimes \Lambda W$ is a relative Sullivan model of $F/F'\to E''\to B$. To produce the middle vertical map one notices that the composition
\[
    \Lambda W\otimes \Lambda V'\overset{\iota}{\longleftarrow} \Lambda W\otimes \Lambda V \longleftarrow \Lambda W\otimes \Lambda V''
\]
is nullhomotopic, and since the $\iota$ is surjective, we may find a map homotopic to the second map such that the composition with $\iota$ is zero, which therefore factors through $\Lambda W\otimes \ker \iota$. The map induced on fibres $\Lambda V''\to \ker \iota$ is then a quasi-isomorphism, because we already know that $\ker\iota$ is a model of the Thom space. As a consequence, the middle map in \eqref{eq:1111} is a quasi-isomorphism.

Since the Thom space of a vector bundle is the cofiber of the inclusion of the sphere bundle into the disc bundle, we can apply the construction above.

A relative Sullivan model of $S(\gamma_1)\to \Gr_1^{\bC}(\CP^n)$ is $\Lambda(a,x,b,y,w)\leftarrow \Lambda(a,x,b,y)$ with $d(w) = a$ \cite[p.\,202]{FHT}. To obtain a model of the Borel construction, we understand this sphere bundle as the map of homogeneous spaces
\[
    \frac{U(n+1)}{U(1)\times U(n-1)}\lra \frac{U(n+1)}{U(1)\times U(1)\times U(n-1)}
\]
induced by the inclusion $U(1)\times U(n-1)\to U(1)\times U(1)\times U(n-1)$ that includes the first factor into the first factor and the second factor into the third factor. From this we obtain the Sullivan model of the Borel construction on the sphere bundle:
\[
    \Lambda(b,y,a,x)/I(a)\otimes \Lambda W,
\]
where $I(a)$ is the ideal generated by $a$. Finally, the relative kernel of the surjective map
\[
    \Lambda(b,y,a,x)/I(a)\otimes \Lambda W\longleftarrow \Lambda(b,y,a,x)\otimes \Lambda W
\]
is the one proposed in the statement.
\end{proof}

\subsection{A model of the space of sections} Let $B = \bQ\langle 1,\beta_1,\beta_2,\ldots,\beta_{n}\rangle$ be the rational homology of $\CP^n$ with its coalgebra structure, with $\beta_i$ the dual of $b^i$. As $\CP^n$ is formal, this is the dual of a rational model of $\CP^n$. Following the construction of Brown and Sczcarba (Proposition~\ref{prop:BS}), we construct the algebra
\[
	\Lambda(u\otimes 1, u\otimes \beta_1,t\otimes 1,t\otimes \beta_1,\ldots,t\otimes \beta_{n})
\]
with differential
\[
	\dd(t\otimes \beta_j) = \sum_{i=0}^{n} \binom{n+1}{i} b^iu^{n-i+1}\otimes \beta_j
\]
and, after applying the relations of the ideals $I$ and $J$,
\begin{align*}
	b^{k}\otimes \beta_j
	&= \begin{cases}
		(-1)^{k+1} \beta_{j-k} & \text{if $k\geq j$} \\
		0 & \text{otherwise.}
		\end{cases}
	\\
	u^k\otimes \beta_j &= \binom{k}{j}(u\otimes \beta_1)^j(u\otimes 1)^{k-j}
\end{align*}
(the second is proven by induction again. The first uses that $\alpha(b^k) \equiv k+1\mod 2$), becomes
\[
	\dd(t\otimes \beta_j) = \binom{n+1}{j}\left(\sum_{i=0}^j (-1)^{i+1}\binom{j}{j-i}(u\otimes \beta_1)^{j-i}\right)(u\otimes 1)^{n-j+1}
\]
or,
\[
	\dd(t\otimes \beta_j) = -\binom{n+1}{j}\left(u\otimes \beta_1-1\right)^j(u\otimes 1)^{n-j+1}.
\]
If $f$ is a section, the augmentation $\epsilon_f\colon \Lambda(u\otimes 1, u\otimes \beta_1,t\otimes 1,t\otimes \beta_1,\ldots,t\otimes \beta_{n})\to \bQ$ sends $u\otimes \beta_1$ to the degree $\degree$ of $f$.
\begin{proposition}\label{prop:model-sections}
    A Sullivan model of the connected component $\Gamma(\Cbun)_{\degree}$ of sections of degree $\degree$ is 
\[
	\Lambda(u\otimes 1,t\otimes 1,t\otimes \beta_1,\ldots,t\otimes \beta_n)
\]
with differential
\[
	\dd(t\otimes \beta_j) = -\binom{n+1}{j}\left(\degree-1\right)^j(u\otimes 1)^{n-j+1}.
\]
\end{proposition}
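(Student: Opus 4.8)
The plan is to obtain the component model by applying the recipe for models of non-connected spaces recalled above (following \cite{BuijsMurillo:basic}) to the cdga $A = \Lambda(u\otimes 1, u\otimes\beta_1, t\otimes 1, t\otimes\beta_1, \ldots, t\otimes\beta_n)$ constructed just above, which by Proposition~\ref{prop:BS} is a (non-connected) rational model of the full space of sections $\Gamma(\Cbun)$. A section $f$ of degree $\degree$ provides a basepoint and hence an augmentation $\epsilon_f$, and the model of the component $\Gamma(\Cbun)_{\degree}$ is the quotient $(A\oplus\bQ)/(a\sim\epsilon_f(a))$. The work is to make this quotient explicit.

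First I would record the degrees of the generators. Since $u$ has degree $2$ and $\beta_1$ has degree $-2$, the generator $u\otimes\beta_1$ sits in degree $0$, whereas $u\otimes 1$ has degree $2$ and each $t\otimes\beta_j$ has odd positive degree $2n+1-2j$ (with $t\otimes 1$ of degree $2n+1$). Thus $u\otimes\beta_1$ is the unique generator in degree $0$, the degree-$0$ subalgebra of $A$ is the polynomial ring $\bQ[u\otimes\beta_1]$, and $A$ splits as $\bQ[u\otimes\beta_1]\otimes\Lambda(u\otimes 1, t\otimes 1, t\otimes\beta_1, \ldots, t\otimes\beta_n)$. The components of the realization are then distinguished by the value $\epsilon(u\otimes\beta_1)$, and by the computation preceding the statement $\epsilon_f(u\otimes\beta_1)=\degree$.

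It remains to identify the abstract quotient with an honest substitution. Because $u\otimes\beta_1$ is a cocycle, the ideal generated by $u\otimes\beta_1-\degree$ is a differential ideal, so $A/(u\otimes\beta_1-\degree)$ is a cdga; this is precisely the quotient $(A\oplus\bQ)/(a\sim\epsilon_f(a))$ attached to the component of $f$. Forming it deletes the generator $u\otimes\beta_1$ and replaces it everywhere by the constant $\degree$, leaving the free algebra $\Lambda(u\otimes 1, t\otimes 1, t\otimes\beta_1, \ldots, t\otimes\beta_n)$; substituting $u\otimes\beta_1=\degree$ into the differential $\dd(t\otimes\beta_j) = -\binom{n+1}{j}(u\otimes\beta_1-1)^j(u\otimes 1)^{n-j+1}$ derived above yields the claimed formula. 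The main (and only mild) obstacle is the degree bookkeeping: one must be certain that $u\otimes\beta_1$ is the sole degree-$0$ generator, so that passing to a component is effected by this single substitution and no further relations among the positive-degree generators are introduced. That the result is a genuine, if non-minimal, Sullivan model is then clear, since freeness is immediate and $\dd^2=0$ descends from $A$.
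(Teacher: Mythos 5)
Your proposal is correct and follows essentially the same route as the paper: the paper's (implicit) proof is precisely the preceding Brown--Szczarba computation of $\dd(t\otimes \beta_j) = -\binom{n+1}{j}(u\otimes\beta_1-1)^j(u\otimes 1)^{n-j+1}$ together with the observation that $\epsilon_f(u\otimes\beta_1)=\degree$, after which one passes to the component model by the recipe for non-connected spaces, i.e.\ by substituting $u\otimes\beta_1=\degree$. Your write-up merely makes explicit the step the paper leaves tacit (that $u\otimes\beta_1$ is the unique degree-$0$ generator and a cocycle, so the component quotient is this single substitution and the result remains free), which is a faithful elaboration rather than a different argument.
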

We see that, if $\degree\neq 1$, then $\dd(t\otimes \beta_n) = -(n+1)(\degree-1)^n(u\otimes 1)$ is a nonzero multiple of $(u\otimes 1)$, and therefore each component of the space of sections has the rational homotopy type of $PU(n+1)$, with explicit cocycles in degree $2(n-j)+1$ mapping to the looping of the Chern classes:
\begin{equation}\label{eq:301}
	x_{2(n-j)+1}=(t\otimes \beta_j) - \frac{1}{n+1}\binom{n+1}{n-j}(u\otimes 1)^{n-j}(t\otimes \beta_n).
\end{equation}
If $\degree = 1$, then it has the rational homotopy type of $\U(n)\times \bC P^n$.

\subsection{A relative model of the Borel construction on the space of sections}
We now find a relative model of the Borel construction 
\[\Gamma(\Cbun)_{\degree}\times_{\U(n+1)}\EU(n+1) \to \BU(n+1).
\]
using Theorem \ref{thm:newBS}. In our case, by Proposition \ref{prop:35} we have 
\begin{align*}
    \Lambda V &= \Lambda (u,t) &
    B &= \Lambda(b,y) &
    \Lambda W &= \Lambda (c_1,\ldots,c_{n+1}).
\end{align*} 
We will denote the dual coalgebra of $\Lambda(b,y)$ as follows: The dual of $b^j$ will be denoted $\beta_{j}$ and the dual of $b^j\cdot y$ will be denoted $\gamma_{2(n+j)+1}$. We will denote the dual coalgebra of $\Lambda W$ as follows: the dual of the product $c_{i_1}^{\ell_1}\cdot\ldots \cdot c_{i_k}^{\ell_k}$ will be denoted $\theta_{i_1,\ldots,i_k}^{\ell_1,\ldots,\ell_k}$. Then the coalgebra dual to $\Lambda(b,y)$ is
\[
    B_* = \bQ\langle \{\beta_{j}\}_j, \{\gamma_{2(n+j)+1}\}_j\rangle
\]
with differential $\ddual(\beta_{n+j+1}) = \gamma_{2(n+j)+1}$. The coalgebra dual to $\Lambda(b,y)\otimes \Lambda W$ is 
\[
    B_*\otimes (\Lambda W)^{\vee} = \bQ\langle \{\beta_{j}\}_j, \{\gamma_{2(n+j)+1}\}_j\rangle\otimes \bQ\langle\{ \theta_{i_1,\ldots,i_k}^{\ell_1,\ldots,\ell_k}\}\rangle
\]
and the dual of the composition $B\to B\otimes \Lambda W\overset{\partial}{\to} B\otimes \Lambda W$ vanishes on all generators except for
\[
    \ddual(\beta_{j}\otimes \theta^1_{q}) = (-1)^q\gamma_{2(q+j)-1}.
\]
%\footnote{\federico{auxiliar (borrar antes de mandar al arxiv)
%\[
%    D(\beta_{j}\otimes \theta_{2q}) = -\sum_{q=n+1-j}^{n+1} t\otimes \gamma_{2(q+j)-1}\otimes c_{2q}.
%\]%
%}}
We have that
\[
    \Lambda V\otimes B_* = \Lambda(\{u\otimes \beta_{j}\}_j, \{u\otimes \gamma_{2j+1}\}_j, \{t\otimes \beta_{j}\}_j ,\{t\otimes\gamma_{2j+1}\}_j)
\]
and the non-trivial differentials in 
\[
	(\Lambda V\otimes B_*)\otimes \Lambda W
\]
in degrees $\geq -1$ are
{\footnotesize
\begin{align*}
	\dd(t\otimes \beta_{k}) &= 
	(-1)^n\sum_{q=0}^n(-1)^{q-1}c_{q}\sum_{j=0}^{n-q}(-1)^{j+1}\binom{n-q+1}{j}u^{n-q-j+1}\otimes \beta_{k-j}
	\\
        &- c_{n+1-k}\cdot t\otimes \gamma_{2n+1}
        \\
	&=(-1)^n\sum_{q=0}^n(-1)^{q-1}c_{q}\sum_{j=0}^{n-q}(-1)^{j+1}\binom{n-q+1}{j}\binom{n-q-j+1}{k-j}\degree^{k-j}(u\otimes 1)^{n-q-k+1}
 \\
        &- c_{n+1-k}\cdot t\otimes \gamma_{2n+1}
	\\
	&= (-1)^n\sum_{q=0}^n(-1)^{q-1}c_{q}\binom{n-q+1}{k}\sum_{j=0}^{n-q}(-1)^{j+1}\binom{k}{j}\degree^{k-j}(u\otimes 1)^{n-q-k+1}
 \\
        &- c_{n+1-k}\cdot t\otimes \gamma_{2n+1}.
\end{align*}
}
In the second equality we have applied the relations in the ideals $I$ and $J$ and in the third equality we have simplified the combinatorial numbers. The last term in each sum is obtained as
\[
    (-1)^{|t|}(-1)^{\alpha(|b_k|)+\alpha(|b_k|+|c_{q}|)}\partial^{\vee}t\otimes (\beta_k\otimes \theta_q^1)\otimes c_q.
\]
This summand is in degree $<-1$ unless $q = n-k+1$, in which case it has degree $0$.
Now, in dimension $0$ there are two generators: $u\otimes \beta_1$ and $t\otimes \gamma_{2n+1}$, which are related by
\[
    \dd(t\otimes \beta_{n+1}) = 1-(1-(u\otimes \beta_1))^{n+1} - t\otimes \gamma_{2n+1},
\]
therefore, if $\epsilon$ is the augmentation associated to the component that contains a section $f$, then $\epsilon(t\otimes \gamma_{2n+1}) = \epsilon(1-(1-u\otimes \beta_1)^{n+1})$. Writing $\degree = \epsilon(u\otimes \beta_1)$, we have that $\epsilon(t\otimes \gamma_{2n+1}) = 1-(1-\degree)^{n+1}$.

\begin{proposition}\label{prop:341}
    A relative model of the action of $\U(n+1)$ on the space of sections of degree $\degree$ is
    \[    
    \Lambda(u\otimes 1,t\otimes 1,t\otimes \beta_1,\ldots,t\otimes \beta_{n})\otimes \Lambda W
\]
with differential
\begin{align*}
\dd(t\otimes \beta_{k})&= (-1)^n\sum_{q=0}^n(-1)^{q-1}c_{q}\binom{n-q+1}{k}\sum_{j=0}^{n-q}(-1)^{j+1}\binom{k}{j}\degree^{k-j}(u\otimes 1)^{n-q-k+1} \\
&- (1-(1-\degree)^{n+1})c_{n-k+1}.
\end{align*}
\end{proposition}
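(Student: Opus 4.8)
The plan is to specialise the general formula of Theorem~\ref{thm:newBS} to the relative model of the $\U(n+1)$-action on $\Cbun\to\CP^n$ obtained in the preceding proposition, and then to cut out the degree-$\degree$ component. Since the full differential of the resulting Borel model has already been computed in the displays immediately before the statement, the remaining work is to pass to the quotient by the ideal $K$ of Proposition~\ref{prop:BS} and to impose the augmentation that singles out the component of degree $\degree$.

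First I would record the generators of the large cdga produced by Theorem~\ref{thm:newBS}, namely $\Lambda(V\otimes B_*)/K\otimes\Lambda W$ with $V=\langle u,t\rangle$ and $B=\Lambda(b,y)$ the model of $\CP^n$, and sort them by degree. Using $|u|=2$, $|t|=2n+1$, $|\beta_j|=-2j$ and $|\gamma_{2(n+j)+1}|=-(2(n+j)+1)$, the generators of non-negative degree are exactly $u\otimes 1$, $u\otimes\beta_1$, $t\otimes 1$, $t\otimes\beta_1,\dots,t\otimes\beta_n$, and $t\otimes\gamma_{2n+1}$; every remaining generator sits in strictly negative degree and hence belongs to $K$. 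In particular the two generators $u\otimes\beta_1$ and $t\otimes\gamma_{2n+1}$ both lie in degree zero.

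Next I would resolve the degree-zero generators. The only negative-degree generator whose differential lands in degree zero is $t\otimes\beta_{n+1}$, of degree $-1$; since it lies in $K$, so does its differential $\dd(t\otimes\beta_{n+1})=1-(1-(u\otimes\beta_1))^{n+1}+t\otimes\gamma_{2n+1}$, and therefore the quotient imposes the relation $t\otimes\gamma_{2n+1}=(1-(u\otimes\beta_1))^{n+1}-1$. Thus $t\otimes\gamma_{2n+1}$ is not an independent generator, and $u\otimes\beta_1$ remains as the single degree-zero generator indexing the connected components. To select the component of degree $\degree$ I would apply the augmentation $\epsilon_f$ with $\epsilon_f(u\otimes\beta_1)=\degree$; the relation above then forces $\epsilon_f(t\otimes\gamma_{2n+1})=(1-\degree)^{n+1}-1$. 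Substituting both scalars into the differential computed earlier replaces every power of $u\otimes\beta_1$ in the main term by the corresponding power of $\degree$ and turns the correction term $c_{n-k+1}\cdot(t\otimes\gamma_{2n+1})$ into $((1-\degree)^{n+1}-1)\,c_{n-k+1}$, leaving precisely the stated formula on the surviving free generators $u\otimes 1,t\otimes 1,t\otimes\beta_1,\dots,t\otimes\beta_n$ together with $\Lambda W$. As a consistency check, quotienting out the generators of $\Lambda W$ must return the fibre model of Proposition~\ref{prop:model-sections}, which confirms condition (ii) in the definition of a relative model.

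The step demanding the most care is the bookkeeping of the quotient by $K$: one must verify that no negative-degree generator other than $t\otimes\beta_{n+1}$ produces a relation in non-negative degree, so that the listed generators are genuinely free and the sole identification is the one eliminating $t\otimes\gamma_{2n+1}$. The facts that $\dd^2=0$ and that the construction yields a bona fide relative Sullivan algebra are inherited from Theorem~\ref{thm:newBS}; it then only remains to observe that the conjugation action preserves the degree homomorphism, so that $\GCbun_\degree$ is $\U(n+1)$-invariant and its Borel construction is indeed modelled by the algebra above.
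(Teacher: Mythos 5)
Your proposal is correct and follows essentially the same route as the paper: it specialises Theorem~\ref{thm:newBS} to the relative model of $\Cbun\to\CP^n$, identifies $u\otimes 1$, $u\otimes\beta_1$, $t\otimes 1$, $t\otimes\beta_1,\dots,t\otimes\beta_n$, $t\otimes\gamma_{2n+1}$ as the non-negative-degree generators, uses $\dd(t\otimes\beta_{n+1})=1-(1-(u\otimes\beta_1))^{n+1}+t\otimes\gamma_{2n+1}$ to eliminate $t\otimes\gamma_{2n+1}$, and evaluates the augmentation $\epsilon_f(u\otimes\beta_1)=\degree$, $\epsilon_f(t\otimes\gamma_{2n+1})=(1-\degree)^{n+1}-1$ to land on the stated differential. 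Your added bookkeeping (that no other negative-degree generator yields a relation in non-negative degree, and that conjugation preserves the degree of a section) only makes explicit what the paper leaves implicit.
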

\subsection{The orbit map} To compute the orbit map $\Psi$ and obtain Theorem \ref{thm:main}, we use that the following diagram commutes:
\[\xymatrix@C=0cm{
\{W\}\times \U(n+1)\ar[rr]^{\Psi}\ar[d] && \Gamma(\Cbun)\ar[d] \\
\{W\}\times \EU(n+1)\ar[dr]\ar[rr] && \Gamma(\Cbun)\times_{\U(n+1)} \EU(n+1) \ar[dl] \\
&\BU(n+1).
}\]
Therefore, by \cite{FHT} (bottom of page 204), we know that there is a rational model of this diagram that commutes on the nose: 
\[
    \xymatrix{
    \Lambda(s^{-1}W) && \Lambda(u\otimes 1,t\otimes 1,\ldots,t\otimes \beta_n) \ar[ll]_-{f}\\
    \Lambda(s^{-1}W)\otimes \Lambda W\ar[u]  && \Lambda(u\otimes 1,t\otimes 1,\ldots,t\otimes \beta_n)\otimes \Lambda W \ar[u]\ar[ll]_-{\bar{f}} \\
    &\Lambda W\ar[ur]\ar[ul]&
    }
\]
The map $f$ induced by $\Psi$ on rational models necessarily satisfies $\Psi(u\otimes 1) = 0$ by degree reasons. By the same reason, $\bar{f}(u\otimes 1)$ must be a multiple of $c_1$ (the only generator in degree $2$). Now, if $I(u\otimes 1)$ is the ideal generated by $u\otimes 1$, then the differential in Proposition \ref{prop:341} can be written as
\[
    \dd(t\otimes \beta_k) \equiv \left((1-d)^{n+1} - (1-d)^{k}\right)c_{n-k+1} \mod I(u\otimes 1)
\]
Therefore, if we write $I(s^{-1}c_1)$ for the ideal of $\Lambda(s^{-1}W)\otimes \Lambda W$ generated by $s^{-1}c_1$, then any choice that makes $\bar{f}$ a map of differential graded algebras is of the form 
\[
    \bar{f}(t\otimes \beta_k) \equiv \left((1-\degree)^{n+1} - (1-\degree)^{k}\right) s^{-1}c_{n-k+1} \mod I(s^{-1}c_1).
\]
As the diagram commutes on the nose, writing now $I(s^{-1}c_1)$ for the ideal generated by $c_1$ in $\Lambda (s^{-1}W)$, this gives
\[
    f(t\otimes \beta_k) \equiv \left((1-\degree)^{n+1} - (1-\degree)^{k}\right) s^{-1}c_{n-k+1} \mod I(s^{-1}c_1)
\]
as well. As a consequence, the generators from \eqref{eq:301} satisfy that
\[
    f(x_{2(n-k)+1}) \equiv \left((1-\degree)^{n+1} - (1-\degree)^{k}\right) s^{-1}c_{n-k+1} \mod I(s^{-1}c_1)
\]
Since we know that the orbit map factors through $PU(n+1)$, the terms in the ideal vanish. We then deduce the main theorem noticing that the coefficient $(1-\degree)^{n+1} - (1-\degree)^{k} $ vanishes only if $\degree = 0,1$ or $\degree = 2$ and $n-k+1$ is even. 
%\begin{theorem} The homomorphism 
%\[
%	H^*(\Gamma(\CS)_{\degree};\bQ) \lra H^*(PU(n+1);\bQ)
%\]
%is an isomorphism if $\degree \neq 0,1,2$. If $\degree = 0,1$, then it is the zero homomorphism, and if $\degree = 2$, then it is zero on generators of degree $4k-1$ and non-zero in the other generators.
%\end{theorem}
%%% Local Variables:
%%% TeX-master: "../main"
%%% End:

\appendix
\section{Almost-complex submanifolds and the scanning map}\label{ss:scanning}

In this section we define almost-complex submanifolds and their moduli space. We then introduce the scanning map, which may be understood as follows: There is a sheaf of topological spaces on $\CP^n$ that assigns to each open subset $U$ the space of proper submanifolds with a certain topology (introduced in \cite{GR-W}). This is not a homotopy sheaf, and the scanning map is the global sections of its homotopy sheafification. This viewpoint, not pursued here, is developed in \cite{R-WEmbedded}.

\subsection{Almost-complex submanifolds} Let $V$ be a complex vector space of dimension $n$. Let $\SSH{k}(V)$ be the space of pairs $(L,\gamma)$ where $L\in \Gr_{2k}(V)$ is a real $2k$-plane in $V$ and $\gamma\colon [0,1]\to \Gr_{2k}(V)$ is a path such that $\gamma(0) = L$ and $\gamma(1) \in \Gr_{k}^\bC(V)\subset \Gr_{2k}(V)$. Forgetting the datum of the path defines a fibre bundle
\[
    \SSH{k}(V)\lra \Gr_{2k}(V).
\]
This assignment is functorial on $V$ and thus may be applied fibrewise to the tangent bundle of any complex (or almost-complex) manifold $M$ to obtain a fibre bundle
\begin{equation}\label{eq:theta-structure}
    \SSH{k}(TM)\lra \Gr_{2k}(TM).
\end{equation}
Define the \emph{Gauss map} $\gauss\colon W\to \Gr_{2k}(TM)$ of a smooth submanifold $W\subset \CP^n$ of dimension $2k$ by sending each point of $W$ to its tangent space. 
\begin{definition}
    An \emph{almost-complex structure on a submanifold} $W\subset M$ is a lift of the Gauss map along the projection $\SSH{k}(TM)\to \Gr_{2k}(TM)$.
\end{definition}
Observe that an almost-complex structure on a submanifold $W$ makes $W$ an almost-complex manifold. Additionally, taking $M=\bR^{\infty}$, we have that \eqref{eq:theta-structure} becomes equivalent to $BU(k)\to BO(2k)$, that the Gauss map is the classifying map of the tangent bundle and that an almost-complex structure on a submanifold $W$ of $\bR^{\infty}$ is the same (up to homotopy) as an almost-complex structure on the manifold.

Let $W$ be a compact smooth manifold of real dimension $2k$, let $\Emb(W,M)$ be the space of smooth embeddings of $W$ into $M$ with the Whitney $C^{\infty}$ topology, and let $\ACEmb(W,M)$ be the space of pairs $(f,\ell)$ where $f$ is an embedding and $\ell$ is a lift of the composition $\gauss\circ f$ along \eqref{eq:theta-structure}. Let $\Diff(W)$ be the group of diffeomorphisms of $W$, which acts on these spaces by precomposition. Write $\cA_k(M)$ for the set of almost-complex submanifolds of $M$ of dimension $2k$. There is a bijection 
\[
    \cA_k(M)\cong \coprod_{[W]}\ACEmb(W,M)/\Diff(W)
\]
where $W$ runs along diffeomorphism classes of compact manifolds of dimension $2k$, and we use it to topologise the set $\cA_k(M)$.

\subsection{The scanning map}
Let $V$ be a complex vector space of dimension $n$ with a hermitian metric. Let $\gamma_{k}^\perp(V)\to \Gr_{2k}(V)$ be the orthogonal complement of the tautological bundle of the Grassmannian of real $2k$-planes in $V$. Consider the pullback 
\[
    \xymatrix{
    \tilde{\gamma}_{k}^\perp(V)\ar@{-->}[r]\ar@{-->}[d] & \gamma_{2k}^{\perp}(V)\ar[d] \\
    \SSH{k}(V)\ar[r]& \Gr_{2k}(V)
    }
\]
Define a functor $\SS{k}\colon \Vect(\bC)\to \Top_*$ from the category of hermitian complex vector spaces to the category of based topological spaces by sending a complex vector space $V$ to the Thom space of the vector bundle $\tilde{\gamma}_{k}^\perp(V)\to \widetilde{\Gr}_{k}(V)$. We may apply this construction to the tangent bundle of an almost-complex manifold $M$ obtaining a fibre bundle
\begin{align*}
    \SS{n-k}(TM)&\lra M %\\
    %\PP{k}(T\CP^n)&\lra M \\
    %\PCbun&\lra \CP^n
\end{align*}
with a canonical section.

Following the construction of \cite[p.~1388]{CR-W}, there is a map (called \emph{scanning map}) 
\[
\scan\colon \cA_k(M) \lra \Gamma_c(\SS{k}(TM))
\]
to the space of compactly supported sections of $\SS{k}(TM)\to M$. This map is equivariant with respect to the action of $\Diff_c(M)$ by postcomposition on the left-hand side and to the action by conjugation on the right-hand side.

\subsection{Almost-complex projective hypersurfaces} Let us particularise in the case $M=\CP^n$ and $k=n-1$. In this case, the fibre bundle $\SSH{n-1}(T\CP^n)$ is homotopy equivalent to the fibrewise Thom space of the fibrewise tautological bundle $\gamma_1(T\CP^n)\to \Gr_1^{\bC}(\CP^n)$, which we denote as
\[
    \Cbun\to\CP^n.
\]
This bundle is also homeomorphic to the bundle
\[
    \PCbun\to \CP^n.
\]
The space $U_{d,n}$ sits inside $\CS$ as the subspace of almost-complex hypersurfaces with constant deformation. The composition
\[
    U_{d,n}\lra \CS\lra \Gamma(\PCbun)
\]
is the scanning map studied in this paper.

There is a class $u\in H^2(\Cbun;\bQ)$ whose restriction to each fibre is the Thom class of the corresponding Thom space. If $f$ is a section of this bundle, evaluating $f^{*}(u)$ on the fundamental class of $\CP^1\subset \CP^n$ defines a map
\[
    \degree\colon \GCbun\lra H^2(\CP^n) \cong \bZ
\]
\begin{proposition}
    The map $\degree$ is a bijection on connected components. Additionally, the section that arises from scanning an almost-complex hypersurface of degree $\degree$ is in the component indexed by $\degree$.
\end{proposition}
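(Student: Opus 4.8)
The plan is to compute $\pi_0\GCbun$ by obstruction theory applied to the fibre bundle $\PCbun\lra\CP^n$, which is the bundle we may as well work with since it is diffeomorphic to $\Cbun\lra\CP^n$. Its fibre is $\P(T_x\CP^n\oplus\bC)\cong\CP^n$, a simply connected space whose homotopy groups I recall from the fibration $S^1\to S^{2n+1}\to\CP^n$: one has $\pi_2(\CP^n)\cong\bZ$, while $\pi_k(\CP^n)\cong\pi_k(S^{2n+1})$ for $k\geq 3$, so that $\pi_k(\CP^n)=0$ for $3\leq k\leq 2n$. The base $\CP^n$ carries its standard CW structure with a single cell in each even dimension $0,2,\dots,2n$ and no odd cells.

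First I would run obstruction theory for sections. The obstruction to extending a section over a $2j$-cell lies in $H^{2j}(\CP^n;\pi_{2j-1}(\CP^n))$; since the indices $2j-1$ are odd and at most $2n-1$, we have $\pi_{2j-1}(\CP^n)=0$ for all $1\leq j\leq n$, so every extension obstruction vanishes and sections exist in abundance. The difference of two sections agreeing on the $(2j-1)$-skeleton is measured by a class in $H^{2j}(\CP^n;\pi_{2j}(\CP^n))$, and among the groups with $1\leq j\leq n$ the only nonzero one is $H^2(\CP^n;\pi_2(\CP^n))\cong H^2(\CP^n;\bZ)\cong\bZ$ (for $j\geq 2$ one has $4\leq 2j\leq 2n$, hence $\pi_{2j}(\CP^n)=0$). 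Thus vertical homotopy classes of sections form a single $\bZ$-torsor and $\pi_0\GCbun\cong\bZ$. To identify this bijection with $\degree$, I would use the Hurewicz isomorphism on the fibre: the class $u$ restricts on each fibre to the first Chern class of the tautological bundle, a generator of $H^2(\CP^n;\bZ)$ dual to the generator of $\pi_2(\CP^n)$. Consequently the primary difference class of two sections $f_0,f_1$ is carried to $f_1^*u-f_0^*u\in H^2(\CP^n;\bZ)$, whose evaluation on $[\CP^1]$ equals $\degree(f_1)-\degree(f_0)$; since this difference ranges over all of $\bZ$, the map $\degree$ is a bijection on $\pi_0$.

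For the second statement I would invoke the Pontryagin--Thom nature of the scanning construction of \cite{CR-W}. By construction, the fibrewise collapse onto a tubular neighbourhood of $W$ identifies $\scan(W)^*u$ with the cohomology class Poincaré dual to the fundamental class of $W$ in $\CP^n$; for a hypersurface of degree $\degree$ this dual class is $\degree\cdot h$, where $h\in H^2(\CP^n;\bZ)$ is the hyperplane generator. Evaluating on $[\CP^1]$ and using $\langle h,[\CP^1]\rangle=1$ gives $\degree(\scan(W))=\degree$, so the scanned section lies in the component indexed by $\degree$.

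The routine part is the homotopy-group bookkeeping, which is forced by the cell structure of $\CP^n$ and the vanishing range of $\pi_*(\CP^n)$. I expect the main obstacle to be the two identifications of the abstract invariant with geometry: matching the primary difference class with $f^*u$ (which rests on the Hurewicz duality between $u$ restricted to the fibre and the generator of $\pi_2$), and, more substantially, verifying that scanning is set up so that $\scan(W)^*u$ is exactly the Poincaré dual of $[W]$. The latter is the genuinely geometric input, and is where I would spend the most care, tracing through the collapse map and the naturality of the fibrewise Thom class.
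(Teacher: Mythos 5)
Your proof is correct and follows essentially the same route as the paper's: obstruction theory for sections of the bundle with fibre $\CP^n$ (all extension obstructions vanish, a single difference class survives in $H^2(\CP^n;\pi_2(\CP^n))\cong\bZ$), identification of that class with the degree via the fact that $u$ restricts to a generator on each fibre, and the Poincar\'e-dual/transversality description of $\scan(W)^*(u)$ for the second statement. The only cosmetic difference is that the paper packages the identification through Leray--Hirsch and the chain of isomorphisms $H^2(\CP^n;\pi_2(\CP^n))\cong\hom(H_2(\CP^n),H_2(\CP^n))$, whereas you use the difference-class formula $f_1^*u-f_0^*u$ directly.
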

\begin{proof}
    As observed before, the fibre of the bundle $\PP{1}(T\CP^n)\to \CP^n$ is homeomorphic to $\CP^n$, and the Thom class restricts to the standard generator of $H^2(\CP^n)$. Therefore, the obstructions to the existence of a section live in the trivial groups $H^k(\CP^n;\pi_{k-1}(\CP^n))$. On the other hand, there is a single obstruction to find a homotopy between any given two sections, which lives in $H^2(\CP^n;\pi_2 (\CP^n))\cong \bZ$. The Hurewicz homomorphism $\pi_2(\CP^n)\to H_2(\CP^n)$ is an isomorphism and therefore we have isomorphisms
    \[
        H^2(\CP^n;\pi_2 (\CP^n)) \cong H^2(\CP^n;H_2(\CP^n)) \cong \hom(H_2(\CP^n),H_2(\CP ^n))
    \]
    Additionally, since the class $u\in H^2(\PP{1}(T\CP^n))$ restricts to the Thom class of the fibre, by the Leray-Hirsch theorem
    \[
        H_2(\PP{1}(T\CP^n))\cong H_2(\CP^n)\oplus H_2(\CP^n).
    \]
    A section of the bundle determines therefore a homomorphism
    \[
        \hom(H_2(\CP^n),H_2(\CP^n))
    \]
    which under the previous chain of isomorphisms corresponds to the obstruction. The evaluation at $u$ of the dual of this homomorphism is precisely the map $\degree$. 

    For the second statement, notice that the class $u\in H^2(\PP{1}(T\CP^n))$ is the Poincaré dual of the submanifold $\SSH{1}(T\CP^n)$. If $W\in \CS$, then $\scan(W)$ is a section transverse to this submanifold, and therefore its inverse image $W$ is Poincaré dual to $\scan(W)^*(u)$. Its evaluation at the fundamental class of $\CP^1$ computes the degree $\degree$ of $W$.
 \end{proof}
%We will write $\GCbun_\degree$ for the component indexed by the integer $\degree$. A section in that component is said to have degree $\degree$.

%\begin{proposition}
%    The components of the space of sections $\GSbun$ are indexed by $\bZ\times 2\bZ$. Additionally, the section that arises from scanning an oriented smooth submanifold of codimension $2$ of degree $\degree$ is in the component indexed by $(\degree,\chi)$, where $\chi$ denotes the Euler characteristic of the submanifold.
%\end{proposition}
%\begin{proof}
%    TBW (parallel to Lemma 11.1 in op. cit.)
%\end{proof}

%%% Local Variables:
%%% TeX-master: "../main"
%%% End:

\bibliographystyle{alpha}
\bibliography{biblio}
\end{document}